\def\ne{n_{\emptyset}}
\def\nt{n_{\mathfrak{t}}}
\def\nq{n_{\mathfrak{q}}}
\def\Te{\Delta_{\emptyset}}
\def\Tt{\Delta_{\mathfrak{t}}}
\def\Tq{\Delta_{\mathfrak{q}}}
\def\lst{\mathbf{T}}
\def\tri{\mathcal{T}}
\def\even{\mathfrak{e}}
\def\odd{\mathfrak{o}}
\def\R{\mathbb{R}}
\def\Z{\mathbb{Z}}
\def\N{\mathbb{N}}
\DeclareMathOperator{\SL}{SL}
\newcommand{\abs}[1]{\lvert#1\rvert}
\theoremstyle{plain}
\newtheorem{theorem}{Theorem}%[section]
\newtheorem*{theorem*}{Theorem}
\newtheorem{lemma}[theorem]{Lemma}
\newtheorem{proposition}[theorem]{Proposition}
\newtheorem{corollary}[theorem]{Corollary}
\theoremstyle{definition}
\newtheorem{definition}[theorem]{Definition}
\newtheorem*{definition*}{Definition}
\theoremstyle{remark}
\newtheorem{remark}[theorem]{Remark}
\numberwithin{equation}{section}
\begin{document}

\title{$\Z_2$--Thurston Norm and Complexity of 3--Manifolds, II}
\author{William Jaco, J. Hyam Rubinstein, Jonathan Spreer and Stephan Tillmann}

\begin{abstract} 
In this sequel to earlier papers by three of the authors, we obtain a new bound on the complexity of a closed 3--manifold, as well as a characterisation of manifolds realising our complexity bounds. As an application, we obtain the first infinite families of minimal triangulations of Seifert fibred spaces modelled on Thurston's geometry $\widetilde{\SL_2(\R)}.$ 
\end{abstract}

\primaryclass{57Q15, 57N10; 57M50, 57M27}
\keywords{3--manifold, minimal triangulation, layered triangulation, efficient triangulation, complexity, Seifert fibred space, lens space}
\makeshorttitle

%%%%%%%%%%%%%%%%%%%%%%%%%%%%%%%
%%%%%%%%%%%%%%%%%%%%%%%%%%%%%%%

\section{Introduction}

Given a closed, irreducible 3--manifold, its complexity is the minimum number of tetrahedra in a (pseudo-simplicial) triangulation of the manifold. This number agrees with the complexity defined by Matveev~\cite{Matveev-complexity-1990} unless the manifold is $S^3,$ $\R P^3$ or $L(3,1).$ We denote the complexity of $M$ by $c(M).$ The only known infinite families of closed 3--manifolds for which an exact value of the complexity is known are spherical space forms \cite{Jaco-minimal-2009, Jaco-coverings-2011, Jaco-Z2-2013}. 

Asymptotic bounds for an infinite family of double branched coverings of certain alternating closed 3--braids are given by Ni and Wu~\cite{Ni-correction-2015} using the results of \cite{Jaco-Z2-2013}. New lower bounds on the complexity of closed 3--manifolds were recently given with different methods by Cha~\cite{Cha-complexity-2016, Cha-topological-2016}. 

In this paper, we continue to be interested in determining the exact complexity and minimal triangulations of infinite classes of closed 3--manifolds. In order to describe our results, we need the following definition.

Let $M$ be a closed, orientable, irreducible, connected 3--manifold, and let
$S$ be an embedded closed surface dual to a given $\varphi \in H^1(M;\Z_2).$ 
An analogue of Thurston's norm \cite{Thurston-norm-1986} is defined in \cite{Jaco-Z2-2013} as follows. If $S$ is connected, let $\chi_{-}(S) = \max \{ 0,-\chi(S)\},$ and otherwise let
$$\chi_{-}(S) = \sum_{S_i\subset S} \max \{ 0,-\chi(S_i)\},$$
where the sum is taken over all connected components of $S.$ Note that $S_i$ is not necessarily orientable. Define:
$$|| \ \varphi \ || = \min \{ \chi_{-}(S) \mid S \text{ dual to } \varphi\}.$$
The surface $S$ dual to $\varphi \in H^1(M;\Z_2)$ is said to be \emph{$\Z_2$--taut} if no component of $S$ is a sphere and $\chi(S) = -|| \ \varphi \ ||.$ As in \cite{Thurston-norm-1986}, one observes that (after possibly deleting compressible tori) every component of a $\Z_2$--taut surface is non-separating and geometrically incompressible. This follows from the fact that the Klein bottle does not embed in $\R P^3.$

Our previous papers \cite{Jaco-minimal-2009} and \cite{Jaco-Z2-2013} use this norm on $H^1(M;\Z_2)$ to obtain lower bounds on the complexity of 3--manifolds. The first main result (Theorem~\ref{thm:main 1}) in this paper generalises the work from \cite{Jaco-minimal-2009}, and the second main result (Theorem~\ref{thm:main 3}) improves the main result of \cite{Jaco-Z2-2013}. As applications of both results, we give the first infinite families of minimal triangulations of Seifert fibred spaces modelled on Thurston's geometry $\widetilde{\SL_2(\R)}.$

%%%%%%%%%%%%%%%%%%%%%%%%%%%%%%%

\subsection{Rank one}

In \cite{Jaco-minimal-2009} we exhibit an infinite family of lens spaces each member $L$ of which has even order fundamental group which satisfy the relationship $c(L) = 1 + 2 || \ \varphi \ ||,$ where $\varphi$ is the generator for $H^1(L;\Z_2).$ We call these lens spaces \emph{balanced} (since another characterisation, using the terminology of \S\ref{subsec:colouring}, is that the number of $\varphi$--even edges equals the number of $\varphi$--odd edges). An example is $L = L(2n, 1),$ where $|| \ \varphi \ || = n-2$ and $c(L) = 2n-3.$
 
Our first main result puts the main result of \cite{Jaco-minimal-2009} in a greater context. It can be summarised with the slogan: \emph{If $M$ contains a non-orientable $\Z_2$--taut surface of genus $n$, then $M$ is at least as complex as $L(2n, 1).$} An analogous statement can also be made for orientable $\Z_2$--taut surfaces. The proof of the below theorem does not use the main result from \cite{Jaco-minimal-2009}, but borrows heavily from its proof. 

\begin{theorem}\label{thm:main 1} 
Let $M$ be a closed orientable, irreducible, connected 3--manifold not homeomorphic with $\R P^3,$ and suppose that $0 \neq \varphi \in H^1(M;\Z_2).$ 
Then $c(M) \ge 1 + 2 || \ \varphi \ ||.$ Moreover, if equality holds, then $M$ is a balanced lens space.
\end{theorem}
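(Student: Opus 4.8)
The plan is to start from a minimal triangulation $\mathcal{T}$ of $M$ with $t = c(M)$ tetrahedra and to produce an \emph{explicit} dual surface whose complexity I can control, rather than reasoning about an abstract norm--minimising surface. Concretely, I would represent $\varphi$ by a $\Z_2$--valued cocycle on the edges of $\mathcal{T}$, calling an edge \emph{$\varphi$--odd} or \emph{$\varphi$--even} according to its value, and let $S$ be the associated minimal--weight normal surface, meeting each $\varphi$--odd edge once and each $\varphi$--even edge not at all. Since the cocycle restricts to a coboundary on each (simply connected) tetrahedron, every tetrahedron is of one of the types $\Te$, $\Tt$, $\Tq$, carrying no disc, a single normal triangle, or a single normal quadrilateral; write $\ne, \nt, \nq$ for their numbers, so $t = \ne + \nt + \nq$. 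Because $M$ is irreducible I may discard sphere components of $S$, and because $M$ is orientable, irreducible and \emph{not} $\R P^3$ it contains no one--sided $\R P^2$; hence $\chi_{-}(S) = -\chi(S)$, and since $\| \varphi \| \le \chi_{-}(S)$ it suffices to bound $-\chi(S)$ from above.

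Next I would compute $\chi(S)$ from the cell structure that $\mathcal{T}$ induces on $S$: the vertices of $S$ are its intersection points with edges, so their number equals the number $e_o$ of $\varphi$--odd edges; the faces are the normal discs, numbering $\nt + \nq$; and, counting disc sides, the number of arcs is $\tfrac{1}{2}(3\nt + 4\nq)$. This yields $\chi(S) = e_o - \tfrac{1}{2}\nt - \nq$. Feeding in $\chi(M) = 0$, equivalently $e = v + t$ for the vertex and edge numbers $v$ and $e$, the desired inequality $-\chi(S) \le \tfrac{1}{2}(t-1)$ reduces to the purely combinatorial statement $2e_o + \ne \ge \nq + 1$.

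The main obstacle is precisely this combinatorial inequality. The difficulty is that in a minimal triangulation $v$ is small (typically $v = 1$), so the four edges a quadrilateral crosses may be identified to far fewer edges of $M$; thus $e_o$ can be much smaller than the naive estimate $4\nq$ suggests, and a crude degree count does not suffice. To control this I would exploit minimality of $\mathcal{T}$ through the structural results borrowed from \cite{Jaco-minimal-2009} --- the absence of low--degree edges and the restrictions these place on how $\varphi$--odd edges can be identified around the normal discs --- and organise the estimate as a counting/discharging argument over the faces and the $\varphi$--odd edges, tracking the single vertex to produce the additive constant $+1$.

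Finally, for the equality case I would trace back each inequality in turn. Equality forces $\ne = \nt = 0$ and $\nq = t$, a single vertex $v = 1$, exactly $t+1$ edges split into equally many $\varphi$--odd and $\varphi$--even edges with $e_o = \tfrac{1}{2}(t+1)$, and $S$ connected. This says that every tetrahedron carries a quadrilateral and that $S$ is a connected surface realising the norm with the \emph{balanced} edge count; recognising the resulting object as a layered triangulation and invoking the classification underlying \cite{Jaco-minimal-2009}, I would conclude that $M$ is a balanced lens space.
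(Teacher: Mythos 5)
Your framework is sound and in fact coincides with the paper's own setup: the edge colouring, the tetrahedron types $\Te,\Tt,\Tq$, the canonical normal surface $S_\varphi$, and your Euler characteristic computation $\chi(S_\varphi)=e_o-\tfrac12\nt-\nq$ (equivalent to the paper's $\chi(S_\varphi)=1-\even+\tfrac12\nt+\ne$) are all correct, as is the algebra reducing $c(M)\ge 1+2\,||\varphi||$ to the combinatorial statement $2e_o+\ne\ge\nq+1$. The gap is that this combinatorial statement is not a lemma one can dispatch with a generic ``counting/discharging argument over the faces and the $\varphi$--odd edges'': it essentially \emph{is} the theorem, and your proposal stops exactly where the proof begins. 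Establishing the corresponding inequality (the paper's inequality~(\ref{inequ for min tri})) takes all of \S\ref{sec:rank one} and needs machinery absent from your sketch: a \emph{promotion} step (\S\ref{subsec:promoting}) that modifies the minimal triangulation by 4--4 bistellar flips to remove all supportive layered solid tori; the structural results (Proposition~\ref{pro:degree three edges}, Lemmas~\ref{lem:intersection of maximal layered solid tori} and \ref{lem:layered lens char 2}) forcing every degree-three edge into a maximal layered solid torus; and a deficit/gain analysis over maximal layered solid tori of types $\Te$ and $\Tq$.

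There is also a structural reason to doubt that your precise target can be reached by pure counting: you bound $-\chi$ of the \emph{fixed} surface $S_\varphi$, but $S_\varphi$ need not be taut or even incompressible. The discharging gets stuck exactly at configurations where several ``almost supportive'' tori of type $\Tq$ share their $\varphi$--even boundary edge (the cases $d=6,k=3$ and $d=5,k=2$ in the paper): there all tetrahedra around the edge are of type $\Tq$ and no combinatorial deficit exists. The paper escapes by noting that such an edge provides a \emph{compression disc} for $S_\varphi$, performing the compression (or, in the $d=5$ case, a double compression tracked via edge and tetrahedral solutions), and charging the resulting Euler-characteristic gain against the count --- i.e.\ it bounds $||\varphi||$ by $-\chi$ of a \emph{modified} surface, which is a strictly weaker statement than your fixed-surface inequality whenever $S_\varphi$ is compressible. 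Your plan has no mechanism for this. Two further points: ``discarding'' sphere components is incompatible with your cell-structure computation of $\chi$ (removing spheres increases $-\chi$, so the bound does not transfer); you instead need 0--efficiency of minimal triangulations, which rules out sphere and projective plane components of $S_\varphi$ outright. And in the equality case, the step ``recognising the resulting object as a layered triangulation'' is unjustified: that a one-vertex triangulation with $\nq=t$, $\ne=\nt=0$ and balanced edge count must be a layered triangulation of a lens space is precisely the conclusion of the paper's deficit/gain case analysis, not a consequence of the numerical constraints.
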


\begin{corollary}
\label{cor:main}
Let $M$ be a closed orientable, irreducible, connected 3--manifold not homeomorphic with a balanced lens space and suppose that $0 \neq \varphi \in H^1(M;\Z_2).$ Then $c(M) \ge 2 + 2 || \ \varphi \ ||.$
\end{corollary}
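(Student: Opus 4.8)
The plan is to deduce this directly from Theorem~\ref{thm:main 1} via a short contrapositive argument, exploiting the fact that both $c(M)$ and $|| \ \varphi \ ||$ are integers. First I would invoke the main inequality of Theorem~\ref{thm:main 1}, which applies since $M$ is closed, orientable, irreducible, connected and (being a balanced lens space excluded) in particular not homeomorphic with $\R P^3$, and since $0 \neq \varphi \in H^1(M;\Z_2)$. This yields $c(M) \ge 1 + 2 || \ \varphi \ ||$, placing $c(M)$ at most one below the target value, so it remains only to rule out the boundary case.

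Next I would use the characterisation of equality together with integrality. The complexity $c(M)$ is by definition the minimal number of tetrahedra in a triangulation, hence a nonnegative integer, while $|| \ \varphi \ ||$ is a minimum of sums of truncated Euler characteristics and is therefore also an integer; consequently $1 + 2 || \ \varphi \ ||$ is an integer. If we had $c(M) = 1 + 2 || \ \varphi \ ||$, then the ``moreover'' clause of Theorem~\ref{thm:main 1} would force $M$ to be a balanced lens space, contradicting the hypothesis. Hence $c(M) \neq 1 + 2 || \ \varphi \ ||$.

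Finally, combining the inequality $c(M) \ge 1 + 2 || \ \varphi \ ||$ with the strict exclusion $c(M) \neq 1 + 2 || \ \varphi \ ||$ and the integrality of both sides, I would conclude $c(M) \ge 2 + 2 || \ \varphi \ ||$, as desired. The only step requiring any care is the integrality observation, since this is precisely what licenses passing from a strict inequality to an increase by a full unit; every other step is a purely formal consequence of the theorem, so I anticipate no genuine obstacle and expect the argument to occupy only a few lines.
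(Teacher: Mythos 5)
Your overall strategy---invoke Theorem~\ref{thm:main 1}, use its equality characterisation to rule out the boundary case, and upgrade the inequality by one using integrality---is exactly the intended derivation of this corollary, and the integrality step itself is sound: $c(M)$ and $|| \ \varphi \ ||$ are both integers, so $c(M) \ge 1 + 2 || \ \varphi \ ||$ together with $c(M) \neq 1 + 2 || \ \varphi \ ||$ does give $c(M) \ge 2 + 2 || \ \varphi \ ||$. However, there is one genuine flaw, in your very first step: the parenthetical claim that excluding balanced lens spaces ``in particular'' excludes $\R P^3$. Under the paper's definition, a balanced lens space is a lens space $L$ (with even order fundamental group) satisfying $c(L) = 1 + 2 || \ \varphi \ ||$. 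For $\R P^3$ the nontrivial class $\varphi$ is dual to an embedded $\R P^2$, whose Euler characteristic is positive, so $|| \ \varphi \ || = 0$; on the other hand $c(\R P^3) = 2$, since the only closed orientable 3--manifolds admitting a one-tetrahedron triangulation are $S^3,$ $L(4,1)$ and $L(5,2)$. Hence $c(\R P^3) = 2 \neq 1 = 1 + 2 || \ \varphi \ ||$, so $\R P^3$ is \emph{not} a balanced lens space. Consequently $M \cong \R P^3$ is a legitimate instance of the corollary's hypotheses, yet Theorem~\ref{thm:main 1} expressly excludes $\R P^3$, so your argument establishes nothing in that case; as written, your proof only covers $M \not\cong \R P^3$.

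The repair is a one-line case distinction: if $M \cong \R P^3$, then by the computation above $|| \ \varphi \ || = 0$ and $c(M) = 2$, so $c(M) \ge 2 + 2 || \ \varphi \ ||$ holds directly (in fact with equality). For every other $M$ satisfying the hypotheses, Theorem~\ref{thm:main 1} applies and your contrapositive-plus-integrality argument goes through verbatim. With that case added, the proof is complete and coincides with the paper's (implicit) derivation.
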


We now describe a number of infinite families of triangulations that are minimal due to Corollary~\ref{cor:main}.

\textbf{Lens spaces.} This infinite family of triangulations contains lens spaces with fundamental group of even order, whose minimal layered triangulation has all $\varphi$--even edges of degree 4 except for either two $\varphi$--even edges or three $\varphi$--even edges. In the former family of triangulations, we have one edge of degree 3 and one of degree 5, and in the latter we have two edges of degree 3 and one of degree 6. To see that these triangulations are minimal, one checks that the minimal layered triangulation of such a lens space $L$ contains a unique (hence $\Z_2$--taut) non-orientable normal surface $S_\varphi$ representing the non-trivial class in $H^1(L;\Z_2).$ Denoting $\even$ the total number of even edges, then the number of tetrahedra in the minimal layered triangulation is $2\even,$ and the Euler characteristic of the taut surface is $1-\even.$ Hence $c(L) \le 2 \even = 2 + 2\even -2 = 2 + 2 || \ \varphi \ ||,$ which forces equality by Corollary~\ref{cor:main}. See \S\ref{subsec:lens spaces} for details and a description of these lens spaces.

\textbf{Small Seifert fibred spaces with a horizontal taut surface.} The Seifert fibred spaces
$$M_{k,m,n} = S^2( (1,1), (2k+1,1), (2m+1,1), (2n+1,1)),$$ 
where $k, m$ and $n$ are positive integers, are triangulated by \emph{augmented solid tori} having $2(k+m+n+1)$ tetrahedra. These triangulations were first described by Burton~\cite{regina}. We show that this is a minimal triangulation satisfying $ c(M_{k,m,n})  = 2 + 2|| \ \varphi \ ||$. The manifolds $M_{k,m,n} $ are modelled on Thurston's geometry $\widetilde{\SL_2(\R)}$, except for the $8$-tetrahedra case $k=m=n=1,$ which has a Nil structure. 
The minimality of this three-parameter family is explained in \S\ref{subsec:sSFS}.

%%%%%%%%%%%%%%%%%%%%%%%%%%%%%%%

\subsection{Rank two}

Under the additional hypothesis that $M$ is atoroidal, it is shown in \cite{Jaco-Z2-2013} that if $H \le H^1(M;\Z_2)$ is a subgroup of rank two, then 
$$c(M) \ge 2 + \sum_{0 \neq \varphi \in H} ||\;\varphi\;||.$$ 
The improved approach to the rank one case given in this paper, namely taking into account the existence of compression discs, similarly applies in the rank two case to show that the hypothesis that $M$ be atoroidal is superfluous. This has also been observed independently by Nakamura~\cite{Nakamura-complexity-2017}.
Our next main result adds an important observation to this, 
namely a characterisation of the minimal triangulations realising the above lower bound. This again allows us to give a simple corollary which determines further infinite families of minimal triangulations of small Seifert fibred spaces.
In this introduction, we give the following simplification of the technical Theorem~\ref{thm:main 3} stated in \S\ref{sec:rank 2}.

\begin{theorem}\label{thm:main 2} 
Let $M$ be a closed, orientable, irreducible, connected 3--manifold with the property that $H^1(M;\Z_2)$ has rank two. If $$c(M) = 2 + \sum_{0 \neq \varphi \in H^1(M;\Z_2)} ||\;\varphi\;||,$$ then 
$M$ is a generalised quaternionic space and the minimal triangulation is a twisted layered loop triangulation.
\end{theorem}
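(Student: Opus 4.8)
The plan is to start from a minimal triangulation $\tri$ of $M$, with $t=c(M)$ tetrahedra. Since $H^1(M;\Z_2)$ has rank two, $M$ is none of $S^3$, $\R P^3$ or $L(3,1)$; hence a minimal triangulation of the closed irreducible manifold $M$ is $0$--efficient and has a single vertex, so $\tri$ has exactly $t+1$ edges. Write $\varphi_1,\varphi_2,\varphi_3$ for the three non--zero classes, with $\varphi_3=\varphi_1+\varphi_2$. The $2$--colouring of \S\ref{subsec:colouring} assigns to each edge a parity for each $\varphi_i$; because $\varphi_3=\varphi_1+\varphi_2$, every edge inherits a \emph{type} in $(\Z_2)^2$, and an edge of non--zero type is $\varphi_i$--odd for exactly two of the three indices. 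For each $i$ the colouring produces the canonical normal surface $S_{\varphi_i}$ dual to $\varphi_i$, built from one normal quadrilateral or triangle in each tetrahedron that is not monochromatic for $\varphi_i$ and meeting each $\varphi_i$--odd edge once.

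The starting point is the lower bound $c(M)\ge 2+\sum_i || \ \varphi_i \ ||$ of Theorem~\ref{thm:main 3}. I would first isolate the two families of estimates that produce it: the norm inequality $|| \ \varphi_i \ ||\le-\chi(S_{\varphi_i})$ for each $i$, sharp exactly when $S_{\varphi_i}$ is $\Z_2$--taut; and a purely combinatorial accounting of $\sum_i\chi(S_{\varphi_i})$. For the latter I would compute, on the one--vertex triangulation, that $\sum_i\chi(S_{\varphi_i})$ equals $2(t+1)$ minus twice the number of edges of type $(0,0)$, minus the total number of mixed faces over the three classes, plus the total number of non--monochromatic tetrahedra; collecting terms tetrahedron by tetrahedron reduces the desired inequality $\sum_i(-\chi(S_{\varphi_i}))\le t-2$ to a local statement weighing, in each tetrahedron, the number of classes for which it is empty or cuts a triangle against the edges of type $(0,0)$ it carries. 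The role of the compression discs, which is the improvement of this paper, enters in replacing the possibly compressible $S_{\varphi_i}$ by a taut representative without losing the bound.

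The heart of the argument is to run every one of these estimates backwards under the hypothesised equality. Equality forces $|| \ \varphi_i \ ||=-\chi(S_{\varphi_i})$ for all three $i$, so each $S_{\varphi_i}$ is $\Z_2$--taut: by the incompressibility observation of the introduction it has no sphere component and no essential compression disc, which is exactly what the compression--disc term of the previous step must detect as vanishing. Simultaneously the combinatorial inequality must be an equality, and I expect this to force the rigid local picture in which there are no edges of type $(0,0)$ and every tetrahedron meets all three surfaces in a quadrilateral. A short check shows that these two conditions are equivalent to the statement that the four vertices of every tetrahedron realise all four elements of $(\Z_2)^2$; in particular each tetrahedron then carries exactly two edges of each non--zero type, and the edge degrees are pinned down by tightness of the bound $\deg e\ge 3$ available in a $0$--efficient triangulation. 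The main obstacle is precisely here: converting the single scalar equality into this full slate of simultaneous local equalities, in particular controlling the compression--disc correction finely enough to exclude triangles and type--$(0,0)$ edges rather than merely bounding them.

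It remains to reconstruct $\tri$ from the rigidity. The all--quadrilateral condition means that the quadrilaterals of each $S_{\varphi_i}$ glue across faces into the taut surface, and tracking how the four vertex--types propagate across the faces of successive tetrahedra forces the tetrahedra to stack cyclically about their core edges; the type data determine the monodromy of this stacking and single out the twist distinguishing a \emph{twisted} layered loop from an untwisted one. I would make this precise by induction along the stack, using at each stage the types of the newly exposed edges to identify the unique admissible gluing, exactly as in the structure theory of layered triangulations of the earlier minimal--triangulation papers. Once $\tri$ is identified as a twisted layered loop triangulation, the underlying manifold is read off from the construction to be a generalised quaternionic space, completing the characterisation.
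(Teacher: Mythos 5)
Your opening step---deriving from the hypothesised equality that every tetrahedron is of type $\Tq$ for all three non-zero classes, that no edge is even for all of them, and that each $S_{\varphi_i}$ is taut---agrees with the paper, which quotes exactly this rigidity from \cite{Jaco-Z2-2013} and Nakamura. The gap is in your reconstruction step. You claim that this colouring rigidity alone, by ``tracking how the four vertex-types propagate across the faces,'' forces the tetrahedra to stack cyclically about core edges and hence yields a twisted layered loop. This cannot follow from the colouring data alone: precisely the same rigidity (all tetrahedra of type $\Tq$ for every $0 \neq \varphi \in H$, no $H$-even edges) is the starting point of the paper's Theorem~\ref{thm:main 3}, whose conclusion expressly allows for manifolds with $H^1(M;\Z_2)$ of rank at least three that are not generalised quaternionic spaces and whose triangulations are not layered loops (cases (2) and (3) there). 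So any valid argument must use tautness, and the assumption that the rank is exactly two, well beyond the colouring; your sketch never does---after the rigidity step, tautness plays no further role in it. Your aside that the equality ``pins down the edge degrees'' is likewise unsupported.

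What the paper does instead, and what is missing from your proposal, is a second, finer use of tautness: for each edge $e$ in $\ker\varphi$, raising its normal weight from $0$ to $2$ produces a normal surface in the same class, and tautness of $S_\varphi$ forces the number of octagonal discs created to be at least the number of edges whose weight was raised; an octagon appears in a tetrahedron only if a pair of its opposite edges is identified to $e$. Since a one-vertex triangulation has $t+1$ edges but only $t$ tetrahedra, the pigeonhole principle produces a single tetrahedron with \emph{two} pairs of opposite edges identified, i.e.\ a twisted square: a pinched projective plane, a Klein square, or a torus square. The identification of $M$ is then topological, not combinatorial: the pinched projective plane contradicts irreducibility (as $M \neq \R P^3$); the torus square is incompressible and non-separating, forcing rank at least three, which is excluded here; and the Klein square gives a one-sided Klein bottle Heegaard splitting, hence $M$ is a prism manifold by Rubinstein's theorem, after which Proposition~\ref{pro:prism} (a Seifert-invariant computation resting on Frohman's classification of incompressible surfaces) shows that the only prism manifolds carrying three taut quadrilateral surfaces are the generalised quaternionic spaces, the triangulation being dual to the Klein bottle splitting, i.e.\ a twisted layered loop. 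Your proposal contains no counterpart of either the octagon/pigeonhole step or this topological classification, and these two steps are the heart of the proof.
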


Every generalised quaternionic space with $H^1(M;\Z_2)$ of rank two satisfies the equation in the above theorem. These spaces and their triangulations are described in \cite{Jaco-coverings-2011}. As a consequence, we show that in the Pachner graph of all triangulations of a manifold, there may be triangulations with one tetrahedron more than the minimal triangulation but requiring arbitrarily many 2--3 and 3--2 moves to be reduced to a minimal triangulation. (See \S\ref{subsec:prism}.)

\begin{corollary}
\label{cor:3+}
Let $M$ be a closed, orientable, irreducible, connected 3--manifold such that $H^1(M;\Z_2)$ has rank two. If $M$ is not a generalised quaternionic space, then $$c(M) \ge 3 + \sum_{0 \neq \varphi \in H^1(M;\Z_2)} ||\;\varphi\;||.$$
\end{corollary}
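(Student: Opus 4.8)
Set $N = \sum_{0 \neq \varphi \in H^1(M;\Z_2)} ||\;\varphi\;||$. The plan is to obtain the corollary as a direct bookkeeping consequence of Theorem~\ref{thm:main 2} together with the rank-two lower bound $c(M) \ge 2 + N$, which is established in \cite{Jaco-Z2-2013} and shown in the present paper to hold without the atoroidal hypothesis. All of the genuine content lives in those two inputs; the corollary merely combines them with an integrality argument. I would not expect to introduce any new geometric or normal-surface ingredient.

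First I would record that the lower bound applies to every $M$ satisfying the hypotheses, so $c(M) \ge 2 + N$ holds unconditionally. Hence, under the assumption that $M$ is not a generalised quaternionic space, the only scenario left to exclude is the boundary case $c(M) = 2 + N$. I would then invoke Theorem~\ref{thm:main 2} in contrapositive form: that theorem asserts that the equality $c(M) = 2 + N$ forces $M$ to be a generalised quaternionic space, so for an $M$ that is \emph{not} such a space the equality cannot hold, and therefore $c(M) > 2 + N$.

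Finally I would sharpen this strict inequality using integrality. The complexity $c(M)$ is a non-negative integer, being the minimal number of tetrahedra in a triangulation, and $N$ is a non-negative integer because each norm $||\;\varphi\;||$ is a minimum of quantities $\chi_{-}(S)$, each of which is a finite sum of integers $\max\{0,-\chi(S_i)\}$ over the (finitely many nonzero) classes of $H^1(M;\Z_2)$. A strict inequality between two integers improves by at least one, so $c(M) \ge 3 + N$, as claimed. I do not anticipate a real obstacle here, since Theorem~\ref{thm:main 2} does all the work: its characterisation of the equality case is exactly what converts the weak bound into the sharper one off the quaternionic locus. The only point meriting a word of care is the integrality step, where one must confirm $N \in \Z$ so that $c(M) > 2 + N$ genuinely upgrades to $c(M) \ge 3 + N$; this is immediate from the definition of the $\Z_2$--Thurston norm given in the introduction.
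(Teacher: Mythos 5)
Your proposal is correct and matches the paper's intended derivation: the corollary is stated as an immediate consequence of the rank-two lower bound $c(M) \ge 2 + \sum ||\;\varphi\;||$ (valid without the atoroidal hypothesis) and the contrapositive of Theorem~\ref{thm:main 2}, upgraded from a strict to a $+1$ inequality by integrality, exactly as you argue.
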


\textbf{Small Seifert fibred spaces with three vertical taut surfaces.}
The Seifert fibred space
$$M'_{k,m,n} = S^2( (1,-1), (2k+2,1), (2m+2,1), (2n+2,1)),$$ 
where $k, m$ and $n$ are positive integers, is triangulated by an augmented solid torus having $2k+2m+2n+3$ tetrahedra. This has an $\widetilde{\SL_2(\R)}$ structure and satisfies $c(M'_{k,m,n})  = 3 + \sum || \ \varphi \ ||.$ See \S\ref{subsec:sSFS}.
In \S\ref{subsec:prism} we also describe an infinite family of minimal triangulations of prism manifolds realising the lower bound in Corollary~\ref{cor:3+}. 

%%%%%%%%%%%%%%%%%%%%%%%%%%%%%%%

\textbf{Acknowledgements.}
This research was supported through the programme ``Research in Pairs" by the Mathematisches Forschungsinstitut Oberwolfach in 2017. The authors would like to thank the staff at MFO for an excellent collaboration environment.
Jaco is partially supported by NSF grant DMS-1308767 and the Grayce B. Kerr Foundation.
Spreer is supported by the Einstein Foundation (project ``Einstein Visiting Fellow Santos'').
Research of Rubinstein and Tillmann is supported in part under the Australian Research Council's Discovery funding scheme (project number DP160104502). 

%%%%%%%%%%%%%%%%%%%%%%%%%%%%%%%
%%%%%%%%%%%%%%%%%%%%%%%%%%%%%%%

\section{Preliminaries}
\label{sec:triangulations}

This section summarises the results and definitions we need from \cite{Jaco-minimal-2009} in order to carry out the new proofs. The material is repeated here for the benefit of the reader since only a small amount from each subsection in \cite{Jaco-minimal-2009} is needed. The only change is that we have improved the notation in \S\ref{subsec:colouring}.

%%%%%%%%%%%%%%%%%%%%%%%%%%%%%%%

\subsection{Triangulations}

The notation of \cite{Jaco-0-efficient-2003, Jaco-layered-2006} is used in this paper. A triangulation, $\tri,$ of a 3--manifold $M$ consists of a union of pairwise disjoint 3--simplices, $\widetilde{\Delta},$ a set of face pairings, $\Phi,$ and a natural quotient map $p\co \widetilde{\Delta} \to \widetilde{\Delta} / \Phi = M.$ Since the quotient map is injective on the interior of each 3--simplex, we refer to the image in $M$ of a 3--simplex in $\tri$ as a \emph{tetrahedron} and to its faces, edges and vertices with respect to the pre-image. Similarly for images of 2--simplices, 1--simplices and vertices, which are referred to as \emph{faces,} \emph{edges} and \emph{vertices} in $M,$ respectively. For an edge $e,$ the number of pairwise distinct 1--simplices in $p^{-1}(e)$ is referred to as its \emph{degree}, denoted $d(e).$ If an edge is contained in $\partial M,$ then it is termed a \emph{boundary edge}; otherwise it is referred to as an \emph{interior edge}.

%%%%%%%%%%%%%%%%%%%%%%%%%%%%%%%

\subsection{Minimal and 0--efficient triangulations}

A triangulation of the closed, orientable, connected 3--manifold $M$ is termed \emph{minimal} if $M$ has no triangulation with fewer tetrahedra. A triangulation of $M$ is termed \emph{0--efficient} if the only embedded, normal 2--spheres are vertex linking. It is shown by the first two authors in \cite{Jaco-0-efficient-2003} that (1) the existence of a 0--efficient triangulation implies that $M$ is irreducible and $M\neq \R P^3,$ (2) a minimal triangulation is 0--efficient unless $M=\R P^3$ or $L(3,1),$ and (3) a 0--efficient triangulation has a single vertex unless $M=S^3$ and the triangulation has precisely two vertices. These facts are used implicitly throughout the article.

%%%%%%%%%%%%%%%%%%%%%%%%%%%%%%%

\subsection{Layered triangulations of the solid torus and lens spaces}
\label{sec:Layered triangulations of the solid torus and lens spaces}

Layering along a boundary edge is defined in \cite{Jaco-layered-2006} and illustrated in Figure \ref{fig:layering} on the left. We only need this in the restricted setting where  $M= D^2 \times S^1$ is a solid torus and $\tri_\partial$ is a triangulation of $\partial M$ with one vertex. Given a solid torus with a one-vertex triangulation on its boundary, it follows from an Euler characteristic argument that this triangulation has two faces on the boundary meeting along three edges. Suppose $e$ is an edge in $\partial M.$ This is incident to two distinct faces. We say the 3--simplex $\Delta$ is \emph{layered along the (boundary) edge} $e$ if two faces of $\Delta$ are paired, ``without a twist," with the two faces of $\tri_\partial$ incident with $e$. The resulting 3--manifold is homeomorphic with $M$. If $\tri_\partial$ is the restriction of a triangulation $\tri$ of $M$ to $\partial M$, then we obtain a new triangulation of $M.$ 

\begin{figure}[htb]
  \centering{\includegraphics[width=\textwidth]{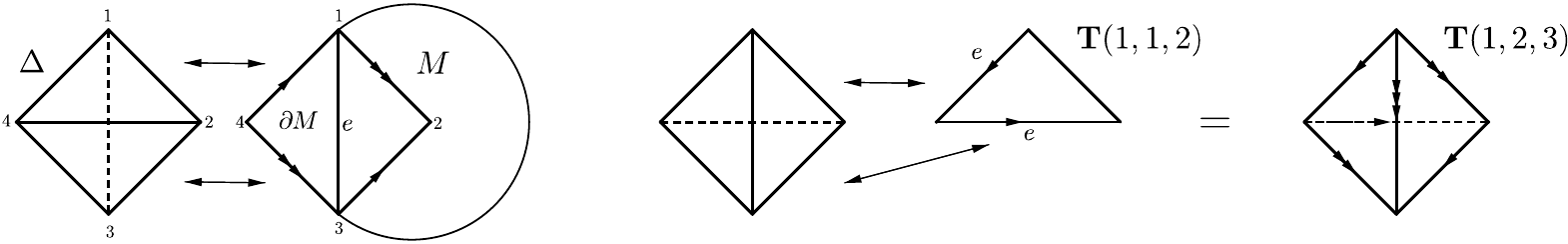}}
    \caption{Left: Layering on a boundary edge. Right: Layered triangulation of the solid torus \label{fig:layering}}
\end{figure}

Starting point for a layered triangulation of a solid torus is the one-triangle triangulation of the M\"obius band shown in Figure~\ref{fig:layering} on the right and considered as a degenerate solid torus and labeled $\lst(1,1,2).$ The first layer is a single tetrahedron layered on the edge labelled $e$ of the M\"obius band in Figure~\ref{fig:layering}, with the two back faces of the tetrahedron identified as indicated by the edge labels. One can then \emph{layer} on any of the three boundary edges (see \cite{Jaco-layered-2006}), giving a layered triangulation of the solid torus with two tetrahedra. 
Inductively, a layered triangulation of a solid torus with $k$ tetrahedra is obtained by layering on a boundary edge of a layered triangulation with $k-1$ tetrahedra, $k \ge 1.$ See also Figure~\ref{fig:tree} for an encoding of these structures by the so-called \emph{L--graph.} A layered triangulation of a solid torus with $k$ tetrahedra has one vertex, $2k+1$ faces, and $k+2$ edges.  Except in the degenerate case, the vertex, three edges, and two faces are contained in the boundary, and there is a special edge in the boundary having degree one; it is called the \emph{univalent edge}.

There is a unique set of unordered nonnegative integers, $\{p,q,p+q\}$, associated with the triangulation on the boundary and determined from the geometric intersection of the meridional slope of the solid torus with the three edges of the triangulation. Two such triangulations with numbers $\{p,q,p+q\}$ and $\{p',q',p'+q'\}$ can be carried to each other via a homeomorphism of the solid torus if and only if the two sets of numbers are identical. There are
three possible ways to identify the two faces in the boundary, each producing a lens space. These identifications can be thought of as ``folding along an edge in the boundary", which is also referred to as ``closing the book" with an edge as the binding. Folding along the edge $p$ gives the lens space $L(2q+p,q)$; along the edge $q$ the lens space $L(2p+q,p)$; and along the edge $p+q$ the lens space $L(\abs{p-q},p)$, see \cite{Jaco-layered-2006}. If the solid torus is triangulated, then after identification one obtains a triangulation of the lens space. If the triangulation of the solid torus is a layered triangulation of the solid torus, then the induced triangulation of the lens space is termed a \emph{layered triangulation} of the lens space. 

%%%%%%%%%%%%%%%%%%%%%%%%%%%%%%%

\subsection{Edges of low degree in minimal triangulations}
\label{sec:low degree edges}

Given a 1--vertex triangulation of a closed 3--manifold, let $E$ denote the number of edges, $E_i$ denote the number of edges of degree $i,$ and $T$ denote the number of tetrahedra. Then $E = T+1,$ and hence $6 = \sum (6-i)E_i$ since the Euler characteristic of a closed 3--manifold is zero and that of the vertex link is two. It follows that there exist at least two edges of degree at most five. We summarise the results we use; a discussion can be found in \cite{Jaco-minimal-2009}.

\begin{proposition}[Edges of degree one or two, \cite{Jaco-0-efficient-2003, Jaco-minimal-2009}]\label{pro:degree one and two edges}
A minimal and 0--efficient triangulation $\tri$ of the closed, orientable, connected and irreducible 3--manifold $M$ has
\begin{enumerate}
\item[(1)] no edge of order one unless $M=S^3,$ and
\item[(2)] no edge of order two unless $M=L(3,1)$ or $L(4,1).$
\end{enumerate}
\end{proposition}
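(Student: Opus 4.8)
The engine throughout is that a minimal triangulation is $0$--efficient (unless $M = \R P^3$ or $L(3,1)$, which can be set aside), and hence, by the facts quoted above, has a single vertex $v$ and admits no embedded normal $2$--sphere other than the vertex link. The plan is to use a low--degree edge to manufacture a normal $2$--sphere and then invoke $0$--efficiency. For degree one, suppose some edge $e$ has degree one, so that exactly one edge $\tilde e$ of a single tetrahedron $\sigma$ maps to $e$. Since the link of an interior edge of a closed manifold is a circle, the two faces of $\sigma$ incident to $\tilde e$ must be identified with one another; that is, $\sigma$ is \emph{folded} along $\tilde e$. I would first record the local effect of this fold (the endpoints of the edge of $\sigma$ opposite to $\tilde e$ become identified, and the two remaining faces become cone--like), and from this configuration write down an embedded normal $2$--sphere. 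By $0$--efficiency it must be vertex linking, and tracing this back forces the triangulation to be the one-- or two--tetrahedron triangulation of $S^3$. The only point needing care is the bookkeeping when the two leftover faces are glued to other tetrahedra rather than to each other; here I would check that the sphere still cuts off a ball meeting $\tri$ only in $\sigma$, so that the same conclusion applies.

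For degree two, suppose $e$ has degree two. Then two tetrahedral pages $\sigma_1,\sigma_2$ (possibly two copies of a single tetrahedron) surround $e$, glued along the two faces incident to $e$, and their union is a ``pillow'' $3$--ball $B$ whose boundary consists of four outer faces $F_1,\dots,F_4$. There are two complementary tools. First, the link of $e$ is a bigon, and assembling one quadrilateral in each page along this bigon produces the edge--linking normal surface; in the relevant cases this is an embedded normal $2$--sphere, so $0$--efficiency forces it to be vertex linking and thereby constrains the gluings of $F_1,\dots,F_4$ to a short list. Second, when $\sigma_1 \neq \sigma_2$ one may attempt a $2$--$0$ move removing both pages and reducing the tetrahedron count; minimality forbids this, so the move must be \emph{obstructed}.

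The main work, and the step I expect to be the main obstacle, is the resulting case analysis of these obstructions. I would enumerate how the outer faces $F_1,\dots,F_4$ can be paired (to each other, or across the identification $\sigma_1=\sigma_2$), track the induced edge identifications and the degrees they create, and discard every configuration that either violates $0$--efficiency by producing a non--vertex--linking normal sphere or fails to close up to a closed manifold. The delicate subcases are precisely those in which the would--be $2$--$0$ move is genuinely blocked: the two pages coincide, or the reglued faces would create a degeneracy. Carrying this bookkeeping through — in particular handling the self--identified $\sigma_1 = \sigma_2$ subcases — should leave exactly the two small triangulations giving $L(3,1)$ and $L(4,1)$ as the only survivors, which is the asserted conclusion.
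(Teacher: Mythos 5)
First, a point of comparison: the paper itself contains no proof of this proposition --- it is quoted verbatim from \cite{Jaco-0-efficient-2003, Jaco-minimal-2009} (``We summarise the results we use; a discussion can be found in \cite{Jaco-minimal-2009}''), so your attempt can only be measured against the arguments in those sources, which combine the Jaco--Rubinstein crushing/barrier machinery with a detailed enumeration of face identifications. Your geometric set-up does match theirs: a degree-one edge forces the two incident faces of a single tetrahedron to be folded together, and a degree-two edge produces a two-tetrahedron pillow to which one tries to apply a 2--0 move, with minimality forcing every such move to be obstructed. However, two of the tools you propose to use do not work as stated, and the case analysis that you yourself identify as ``the main work'' is explicitly deferred (``I would check\dots'', ``should leave exactly\dots''), so what you have is a plan rather than a proof.

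The concrete gaps are these. (1) In the degree-one case there is no embedded normal $2$--sphere that you can simply ``write down''. Folding $\sigma$ along $\tilde e$ produces a $3$--ball whose boundary sphere consists of the two remaining faces of $\sigma$; this sphere is a \emph{subcomplex} of the triangulation, not a normal surface, and one checks directly that no closed normal surface lies in the folded tetrahedron alone (for instance, the quadrilateral dual to $\tilde e$ closes up under the fold to an annulus, not a sphere). So $0$--efficiency cannot be invoked directly; one must either normalise this subcomplex sphere using the barrier arguments of \cite{Jaco-0-efficient-2003}, or crush along it to produce a triangulation of the same manifold with fewer tetrahedra, contradicting minimality. (2) In the degree-two case your ``edge-linking normal surface'' is never a $2$--sphere in the situations that matter: a $0$--efficient triangulation of a closed manifold other than $S^3$ has a single vertex, so the edge $e$ is a loop, and the two quadrilaterals dual to $e$ glue up along the bigon to an annulus whose closure (once the outer faces are identified) is a torus or a Klein bottle --- it is the boundary of a regular neighbourhood of the loop $e$. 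Hence the first of your ``two complementary tools'' is unavailable, and the entire weight of part (2) falls on the enumeration of obstructed 2--0 configurations, which is precisely the bookkeeping you have not carried out. Until the subcomplex-sphere step is made rigorous and that enumeration is done, the exceptional lists $\{S^3\}$ and $\{L(3,1), L(4,1)\}$ are asserted, not established.
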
                                                                    

\begin{proposition}[Edges of degree three, \cite{Jaco-0-efficient-2003, Jaco-minimal-2009}]\label{pro:degree three edges}
A minimal and 0--efficient triangulation $\tri$ of the closed, orientable, connected and irreducible 3--manifold $M$ has no edge of order three unless either
\begin{enumerate}
\item[(3a)] $\tri$ contains a single tetrahedron and $M=L(5,2);$ or
\item[(3b)] $\tri$ contains two tetrahedra and $M=L(5,1)$ or $L(7,2);$ or
\item[(3c)] $\tri$ contains, as an embedded subcomplex, the two tetrahedron, layered triangulation $\lst (1,3,4)$ of the solid torus. Moreover, $\tri$ contains at least three tetrahedra and every edge of degree three is an interior edge of such a subcomplex.
\end{enumerate}
Moreover, the triangulations in (3b) are obtained by identifying the boundary faces of $\lst (1,3,4)$ appropriately.
\end{proposition}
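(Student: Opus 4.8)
The plan is to fix an edge $e$ of degree three, read off the local picture around $e$, and then use minimality to force a tetrahedron--reducing move unless the gluings are degenerate, finally matching the degenerate configurations with the layered solid torus $\lst(1,3,4)$ or with small lens spaces. Around $e$ the three incident $1$--simplices lie in tetrahedra $\sigma_1,\sigma_2,\sigma_3$ of $\widetilde{\Delta}$, not necessarily distinct, glued cyclically so that the link of $e$ is a triangle. This produces three internal faces (the ``walls'' between consecutive sectors), and when the $\sigma_i$ are distinct the remaining six faces bound a triangular bipyramid $B$ whose polar axis is $e$. I would organise the whole argument by the coincidence pattern of $\sigma_1,\sigma_2,\sigma_3$, using Proposition~\ref{pro:degree one and two edges} throughout to discard any configuration that creates an edge of degree one or two.

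If the three tetrahedra are distinct, the natural move is the $3$--$2$ move that replaces the three sectors around $e$ by the two tetrahedra of the complementary (equatorial) decomposition of $B$. This lowers the number of tetrahedra by one, so by minimality it cannot yield a valid triangulation; hence it must be obstructed, and the only obstructions are identifications among the outer faces of $B$, equivalently among its equatorial vertices. For each such identification I would either exhibit an embedded, non--vertex--linking normal $2$--sphere, contradicting $0$--efficiency, or show that the forced face gluing makes two of the sectors coincide, reducing to the next case. Thus the only surviving possibility from this case is that two of the three sectors are the same tetrahedron.

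The substantive case is therefore when exactly two of the incident $1$--simplices lie in a single tetrahedron $\sigma$ and the third lies in $\tau$. Then $\sigma$ is self--glued across one wall while $\tau$ is glued across the other two, and this is exactly the pattern that builds a layered solid torus: the self--glued $\sigma$ plays the role of the first layer on the M\"obius band and $\tau$ is the second layer, so the two tetrahedra form an embedded $\lst(1,3,4)$ whose unique interior edge is $e$. If this solid torus is not all of $M$ we are in case (3c). If instead its two boundary faces are folded together, closing the book, we obtain a lens space; folding $\lst(1,3,4)$ along its three boundary slopes $\{1,3,4\}$ gives $L(5,1)$, $L(7,2)\cong L(7,3)$, and $L(2,1)=\R P^3$, the last excluded by hypothesis, which yields exactly case (3b). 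Finally, if all three $1$--simplices lie in one tetrahedron, a short analysis of the self--gluings shows that $\tri$ is the one--tetrahedron, one--vertex triangulation (where $E=T+1=2$ forces both edges to have degree three), and among these only $L(5,2)$ occurs, giving (3a). I expect the main obstacle to be the bookkeeping in the distinct case: one must verify for \emph{every} identification pattern of the faces of $B$ that it either admits the reducing $3$--$2$ move, violates $0$--efficiency, or collapses onto the $\lst(1,3,4)$ configuration, while simultaneously keeping Proposition~\ref{pro:degree one and two edges} applicable so that no stray edge of degree one or two is overlooked.
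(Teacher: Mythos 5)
Your overall skeleton --- sorting by how many of the three sectors around the degree-three edge $e$ lie in a common tetrahedron, killing the all-distinct case with a 3--2 move and minimality, recognising the $(\sigma,\sigma,\tau)$ pattern as $\lst(1,3,4)$, and folding its boundary to obtain (3a)/(3b) --- is essentially the proof in the cited sources \cite{Jaco-0-efficient-2003, Jaco-minimal-2009} (the present paper only quotes the result). But there are two genuine problems. The first is that your treatment of the all-distinct case rests on a false premise. In a pseudo-simplicial triangulation of a closed manifold, the 3--2 move is \emph{never} obstructed once the three tetrahedra around $e$ are pairwise distinct: the move retriangulates the interior of the abstract bipyramid formed by those three tetrahedra and leaves its six outer faces, together with all their face pairings --- including pairings of outer faces with one another --- completely untouched, so the quotient space is still $M$ and now has one tetrahedron fewer. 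Hence this case contradicts minimality outright; there are no ``obstructed'' configurations, no normal spheres to exhibit, and no collapse onto the next case (indeed outer-face identifications cannot force two of the sectors to coincide, since the sectors are distinct tetrahedra by assumption). Your inference ``by minimality the move cannot yield a valid triangulation, hence it must be obstructed'' is backwards: minimality does not block the move, it rules out the configuration. The bookkeeping you flag as the main obstacle is vacuous, and pursuing it would generate phantom cases; note also that case (3c) cannot arise this way, since the interior edge of $\lst(1,3,4)$ meets only two distinct tetrahedra.

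The second problem is that the case you dispatch in one sentence --- two sectors in $\sigma$, one in $\tau$ --- is where the actual work lies. The wall between the two $\sigma$--sectors is a self-identification of two faces of $\sigma$, each containing a preimage of $e$, and not every such identification yields $\lst(1,2,3)$: for instance, when the two preimages of $e$ are adjacent edges of $\sigma$, one of the possible gluings fixes an edge of $\sigma$ pointwise, folding the tetrahedron and creating an edge of degree one (excluded via Proposition~\ref{pro:degree one and two edges} and the smallness of the exceptional manifolds there), and other gluings create degree-two edges or are incompatible with orientability of $M$. These must be enumerated and discarded before one knows that $\sigma$ alone is the one-tetrahedron layered solid torus; one must then verify that $\tau$ is attached as an honest layering (no twist), and, for conclusion (3c), that the resulting $\lst(1,3,4)$ is \emph{embedded}, i.e.\ that its boundary faces and edges acquire no further identifications in $M$: if its two boundary faces are identified with each other, then $\sigma\cup\tau$ is closed and one is in the two-tetrahedron case (3b), where the three folds along the slopes $1,3,4$ give $L(7,3)\cong L(7,2)$, $L(5,1)$ and $L(2,1)=\R P^3$ --- the last excluded not ``by hypothesis'' but because $\R P^3$ admits no $0$--efficient triangulation. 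Finally, in your one-tetrahedron case you still need the small argument that a single self-pairing of faces of $\sigma$ cannot carry all three wall crossings around $e$ (it would force more than three preimages of $e$), so that $\sigma$ must be closed and $\tri$ consists of that single tetrahedron, whence $M=L(5,2)$. None of these steps is deep, but they constitute the substance of the proof; the all-distinct case, which you expected to be hardest, is the trivial one.
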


%%%%%%%%%%%%%%%%%%%%%%%%%%%%%%%
%%%%%%%%%%%%%%%%%%%%%%%%%%%%%%%

\subsection{Intersections of maximal layered solid tori}
\label{sec:Intersections of maximal layered solid tori}

Throughout this section, assume that $M$ is an irreducible, orientable, connected 3--manifold with a fixed triangulation, $\tri.$ Further assumptions are stated. 

\begin{definition}(Layered solid torus)
A \emph{layered solid torus with respect to $\tri$} in $M$ is a subcomplex in $M$ which is combinatorially equivalent to a layered solid torus. Any reference to $\tri$ is suppressed when $\tri$ is fixed. The edges of the layered solid torus $\lst$ are termed as follows: If there exists more than one tetrahedron, then there is a unique edge which has been layered on first, called the \emph{base-edge (of $\lst$).} The edges in the boundary of $\lst$ are called \emph{boundary edges (of $\lst$)} and all other edges (including the base-edge) are termed  \emph{interior edges (of $\lst$).} 
\end{definition}

For every edge in $M,$ we refer to its degree as its $M$--degree, and its degree with respect to the layered solid torus $\lst$ in $M$ is called its $\lst$--degree. The $\lst$--degree of an edge not contained in $\lst$ is zero. The unique edge of $\lst$--degree one is termed a \emph{univalent edge (for $\lst$)}. Clearly, $\lst$--degree and $M$--degree agree for all interior edges of $\lst.$

\begin{definition}(Maximal layered solid torus)
A layered solid torus is a \emph{maximal layered solid torus with respect to $\tri$} in $M$ if it is not strictly contained in any other layered solid torus in $M.$
\end{definition}

A layered triangulation of a lens space with at least three tetrahedra contains precisely two maximal layered solid tori, and they meet in all but two of the tetrahedra in the triangulation. In general, the intersection of maximal layered solid tori is small:

\begin{lemma}\cite{Jaco-0-efficient-2003, Jaco-minimal-2009}\label{lem:intersection of maximal layered solid tori}
Assume that the triangulation $\tri$ is minimal and 0--efficient. If $M$ is not a lens space with layered triangulation, then the intersection of two distinct maximal layered solid tori in $M$ consists of at most a single edge.
\end{lemma}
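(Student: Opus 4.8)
The plan is to prove the contrapositive: assuming the distinct maximal layered solid tori $\lst_1$ and $\lst_2$ meet in more than a single edge, I show that $M$ is a lens space carrying a layered triangulation. Since $\tri$ is minimal and $0$--efficient it has a single vertex, so $\lst_1\cap\lst_2$ is a connected subcomplex supported on that vertex. The first step is a reduction to the case that the intersection contains a $2$--simplex: two distinct shared edges are both incident to the common vertex, and by the rigidity of the layering — each edge of a layered solid torus determines the pair of faces incident to it within the torus — I expect this to force a common face of $\lst_1$ and $\lst_2$. Hence I may assume $\lst_1\cap\lst_2$ contains a face $f$.

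The central step is the local analysis at $f$, which is incident to two tetrahedra $\Delta,\Delta'$ of $\tri$. In the first configuration, $f$ is a boundary face of both tori with $\Delta\in\lst_1$ and $\Delta'\in\lst_2$ on opposite sides; then $\lst_1$ and $\lst_2$ are glued along a portion of their boundary tori, and since a one--vertex torus boundary consists of only two faces meeting along three edges, this propagates to an identification of all of $\partial\lst_1$ with $\partial\lst_2$. As $\lst_1\cup_\partial\lst_2$ is then closed, it is all of $M$, exhibiting $M$ as a union of two layered solid tori along a common boundary torus — a layered lens space. In the second configuration some tetrahedron $\Delta$ lies in $\lst_1\cap\lst_2$. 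Because the face identifications of $\Delta$ are fixed by $\tri$, its role as a layering tetrahedron is highly constrained, and I would show that being shared propagates along the common layering order, so that the tetrahedra shared by $\lst_1$ and $\lst_2$ form a connected sub-stack. I then test the two ends of this sub-stack against maximality: if at one end $\lst_1$ stops while $\lst_2$ continues with a tetrahedron layered onto $\partial\lst_1$, then $\lst_1$ fails to be maximal. Excluding this at both ends forces $\lst_1\cup\lst_2=M$ with the two tori meeting in all but the two extremal fold tetrahedra — precisely the structure of a layered triangulation of a lens space recorded above.

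The step I expect to be the main obstacle is the propagation argument in the second configuration: making precise that a single shared layering tetrahedron forces the whole overlapping run to be shared, and correctly bookkeeping the two ends of this run against the maximality hypotheses for both $\lst_1$ and $\lst_2$. This requires a careful case analysis of how the three boundary edges of an intermediate layered solid torus can host the next layering, together with a separate treatment of the small and degenerate cases (one-- and two--tetrahedron tori and the base M\"obius band), where the layering order is too short for the generic argument; there the low-degree edge results of Propositions~\ref{pro:degree one and two edges} and~\ref{pro:degree three edges} should be invoked to close the gap. Once the propagation and the endpoint analysis are in place, every configuration with intersection larger than a single edge yields a layered lens space, completing the contrapositive.
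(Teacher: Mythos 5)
Your plan for the shared-tetrahedron configuration (propagate the shared run along the layering, then play the two ends off against maximality so that the only way out is $M=\lst_1\cup\lst_2$ with a fold at each end, i.e.\ a layered lens space) is the right core idea, and it is essentially the engine of the argument in the cited sources. But the reductions you use to get to that configuration contain genuine gaps. The most serious one is the claim that two shared edges force a shared face. The ``rigidity'' you invoke is false: an edge of a layered solid torus does not determine a pair of incident faces (boundary edges are incident to both boundary faces \emph{and} to interior faces), and, more to the point, if $\lst_1$ and $\lst_2$ share no tetrahedron, then each shared edge $e$ is a boundary edge of both tori, the tetrahedra of $\lst_1$ and of $\lst_2$ containing $e$ form two disjoint arcs in the cyclic sequence of tetrahedra around $e$, and a shared face arises only if these arcs are adjacent in that cycle. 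Nothing in your argument prevents other tetrahedra of $M$ from sitting between the two arcs at both ends, for both shared edges. Excluding the configuration ``two shared boundary edges, no shared face'' is precisely where minimality and $0$--efficiency must do real work; your proof never substantially uses $0$--efficiency, which is a warning sign, since the lemma fails without these hypotheses.

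There are two further gaps. In your first configuration (a shared boundary face $f$ with the tori on opposite sides), the assertion that this ``propagates to an identification of all of $\partial\lst_1$ with $\partial\lst_2$'' is a non-sequitur: the second boundary face $f_1'$ of $\lst_1$ is glued in $M$ to a unique tetrahedron, which need not be the top tetrahedron of $\lst_2$, and sharing the three edges of $f$ does not force $f_1'=f_2'$. Even when $f_1'=f_2'$ does hold, ``two layered solid tori glued along a common boundary torus'' is not by definition a layered lens space --- that conclusion needs the top-to-top gluing to be a layering (no twist), or an argument from minimality ruling out twisted gluings. Finally, in the shared-tetrahedron case your propagation tacitly assumes that a shared tetrahedron has either the same or the opposite ``downward'' pair of faces in the two tori; the mixed case, where the two downward pairs overlap in exactly one face, must be ruled out separately (it forces two tetrahedra of $M$ to be glued along three faces), and your proposal does not address it. Until these three points are filled in --- each of which is where the hypotheses of the lemma actually enter --- the proof is incomplete.
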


\begin{lemma}\cite{Jaco-0-efficient-2003, Jaco-minimal-2009}\label{lem:layered lens char 2}
Assume that the triangulation $\tri$ is minimal and 0--efficient, and suppose that $M$ contains a layered solid torus, $\lst,$ made up of at least two tetrahedra and having a boundary edge, $e,$ which has degree four in $M.$ Then either
\begin{enumerate}
\item $\lst$ is not a maximal layered solid torus in $M;$ or
\item $e$ is the univalent edge for $\lst$ and it is contained in four distinct tetrahedra in $M;$ or 
\item $M$ is a lens space with minimal layered triangulation.
\end{enumerate}
\end{lemma}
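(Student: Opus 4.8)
The plan is to strip off the two easy conclusions and then pin down the local picture at $e$ using the bound on intersections of maximal layered solid tori. First I would note that if $M$ is a lens space with layered triangulation we are in conclusion (3), so assume it is not; then Lemma~\ref{lem:intersection of maximal layered solid tori} applies, and any two distinct maximal layered solid tori in $M$ meet in at most one edge. Likewise, if $\lst$ is not maximal we are in conclusion (1), so assume $\lst$ is maximal. It then remains to establish conclusion (2), namely that $e$ is the univalent edge and lies in four distinct tetrahedra. The object to study is the star of $e$ in $M$: since the $M$--degree of $e$ is four, this is a cyclic chain of four tetrahedra. Because $\lst$ is embedded and $e$ lies on $\partial \lst$, the tetrahedra of $\lst$ meeting $e$ occupy one contiguous arc of this chain, bounded by the two boundary faces $F_1,F_2$ of $\lst$; recall that each of $F_1,F_2$ contains all three boundary edges of $\lst$.

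Write $\Delta$ and $\Delta'$ for the tetrahedra lying across $F_1$ and across $F_2$ from $\lst$. The first step is to show $\Delta \neq \Delta'$. Indeed, a single tetrahedron glued to both $F_1$ and $F_2$ is layered onto $\lst$; either its two remaining faces still bound, in which case $\lst$ is properly contained in a larger layered solid torus (contradicting maximality), or those two faces are folded together, in which case $M$ is a layered lens space (excluded). Hence $\Delta \neq \Delta'$, so the complementary arc of the star has length at least two, and consequently the $\lst$--degree of $e$ is at most $2$.

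The key input is then a degree estimate: in a layered solid torus with at least two tetrahedra, each non-univalent boundary edge has $\lst$--degree at least three. I would prove this by tracking the last tetrahedron $\Delta_0$ layered on: its two glued faces identify its four side edges in pairs with the two surviving boundary edges, contributing $2$ to each, on top of their already positive degree in the sub-torus obtained before layering $\Delta_0$. (As a consistency check, for $\lst(1,3,4)$ this yields boundary degrees $1,3,5$ with interior edge of degree $3$.) Combined with $\lst$--degree of $e$ being at most $2$, this forces $e$ to be the univalent edge; thus its $\lst$--degree is $1$ and the complementary arc is a chain $\Delta_1,\Delta_2,\Delta_3$ with $\Delta_1=\Delta$, $\Delta_3=\Delta'$. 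To finish I would verify these three together with $\Delta_0\subset\lst$ are pairwise distinct: $\Delta_0$ differs from the rest since $e$ has $\lst$--degree one, and $\Delta_1\neq\Delta_3$ by the previous step. If two consecutive outer tetrahedra coincided, then the pair of faces identified between their two slots (each containing $e$, hence sharing the edge $e$) would exhibit that tetrahedron as a one-tetrahedron layered solid torus glued to $\lst$ along the whole face $F_1$ or $F_2$; its maximal layered solid torus would then meet $\lst$ in a $2$--simplex, contradicting Lemma~\ref{lem:intersection of maximal layered solid tori}. This gives four distinct tetrahedra and conclusion (2).

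The main obstacle I anticipate is the degree estimate of the third paragraph, together with the careful justification of the dichotomy ``layering or fold'' used to prove $\Delta\neq\Delta'$. Both require precise bookkeeping of how the faces and edges of the last-layered tetrahedron are identified, and of how the boundary faces of $\lst$ sit in $M$; the distinctness argument, by contrast, is a clean application of the intersection bound once the univalent case has been isolated.
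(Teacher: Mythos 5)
You should know at the outset that the paper does not prove this lemma at all: it is quoted, with attribution, from \cite{Jaco-0-efficient-2003, Jaco-minimal-2009}, so there is no in-paper argument to compare yours against, and I can only judge your proposal on its merits against the kind of argument those sources give. Your skeleton is the right one and matches theirs in spirit: dispose of conclusions (1) and (3), view the star of $e$ as a cycle of four tetrahedron corners of which $\lst$ occupies a contiguous arc bounded by $F_1$ and $F_2$, prove the internal-degree estimate (boundary degrees $1,\ \ge 3,\ \ge 3$ for a layered solid torus with at least two tetrahedra) to force $e$ to be univalent, and get distinctness of the four tetrahedra from Lemma~\ref{lem:intersection of maximal layered solid tori}. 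The degree estimate and its proof via the last-layered tetrahedron are correct, and the use of the intersection lemma to separate $\Delta_1$ from $\Delta_3$ and to kill consecutive coincidences is a genuinely nice device.

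The genuine gap is in the pivotal dichotomy ``a single tetrahedron glued to both $F_1$ and $F_2$ is layered onto $\lst$, hence either $\lst$ is not maximal or $M$ is a layered lens space,'' and it is not mere bookkeeping. To contradict maximality you need $\lst\cup\Delta$ to be a \emph{subcomplex combinatorially equivalent to} a layered solid torus, and the two layering face-pairings alone do not give this: the ambient triangulation may impose further identifications on the simplices of $\lst\cup\Delta$ (for instance, the new univalent edge of $\Delta$ can be identified, through a chain of face pairings passing outside $\lst\cup\Delta$, with one of the surviving boundary edges of $\lst$), and in that event $\lst\cup\Delta$ is not a layered solid torus, so neither conclusion (1) nor conclusion (3) is reached and your argument stalls. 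The same embeddedness issue infects the step where a tetrahedron with two of its faces identified ``is'' a one-tetrahedron layered solid torus sitting inside some maximal one. In addition, the gluing onto $F_1\cup F_2$ need not be a layering at all: the two face pairings could disagree on the common edge (a twisted identification, forcing an edge of $M$ to be identified with itself in reverse, which must be excluded by an explicit appeal to the fact that $\tri$ triangulates a manifold), or, in the consecutive-coincidence step, could be a fold fixing an edge pointwise, which produces a degree-one edge and hence $M=S^3$ by Proposition~\ref{pro:degree one and two edges} (excluded here only because $\tri$ is minimal with at least two tetrahedra --- a point you never make). Finally, the case in which $F_1$ is glued directly to $F_2$, so that $e$ has $\lst$--degree four and the tetrahedra ``across'' $F_1,F_2$ do not exist, is skipped entirely; it is exactly the case giving conclusion (3). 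Ruling out these degenerate configurations, using minimality and 0--efficiency, is where the cited proofs spend their effort; your proposal flags the difficulty but, as written, the maximality contradiction does not go through without it.
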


%%%%%%%%%%%%%%%%%%%%%%%%%%%%%%%

\subsection{Colouring of edges and dual normal surface}
\label{subsec:colouring}

Throughout this section, let $\tri$ be an arbitrary 1--vertex triangulation of a closed 3--manifold $M,$ and let $\varphi \co \pi_1(M) \to \Z_2$ be a non--trivial homomorphism.
Each edge, $e,$ is given a fixed orientation, and hence represents an element $[e]\in \pi_1(M).$ If $\varphi[e]=0,$ the edge is termed $\varphi$--even, otherwise it is termed $\varphi$--odd. This terminology is independent of the chosen orientation for $e.$ Faces in the triangulation give relations between loops represented by edges. It follows that a tetrahedron falls into one of the following categories, see Figure~\ref{fig:Z2-homology class}:
\begin{itemize}
\item[] Type $\Tq$: A pair of opposite edges are $\varphi$--even, all others are $\varphi$--odd.
\item[] Type $\Tt$: The three edges incident to a vertex are $\varphi$--odd, all others are $\varphi$--even.
\item[] Type $\Te$: All edges are $\varphi$--even.
\end{itemize}

\begin{figure}[htb]
\begin{center}
      \includegraphics[width=.6\textwidth]{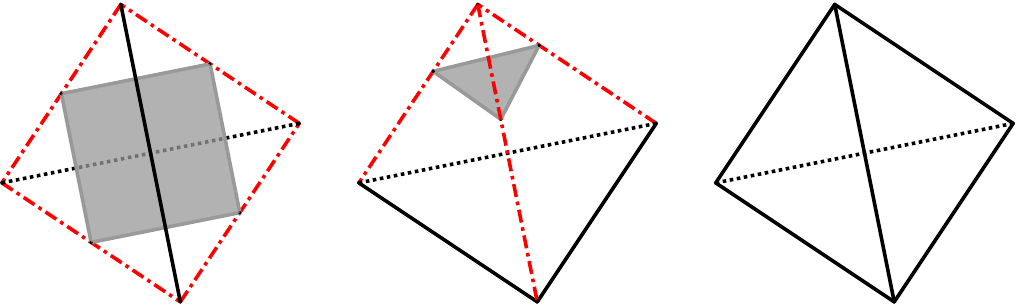}
\end{center}
    \caption{Types of tetrahedra and normal discs of the dual surface}
     \label{fig:Z2-homology class}
\end{figure}

It follows from the classification of the tetrahedra in $\tri$ that, if $\varphi$ is non-trivial, then one obtains a unique normal surface, $S_\varphi(\tri),$ with respect to $\tri$ by introducing a single vertex on each $\varphi$--odd edge. This surface is disjoint from the tetrahedra of type $\Te$; it meets each tetrahedron of type $\Tt$ in a single triangle meeting all $\varphi$--odd edges; and each tetrahedron of type $\Tq$ in a single quadrilateral dual to the $\varphi$--even edges. Moreover, $S_\varphi(\tri)$ is dual to the $\Z_2$--cohomology class represented by $\varphi.$

\begin{lemma}\cite{Jaco-minimal-2009}
Assume that the triangulation $\tri$ contains a single vertex and that all edge loops are coloured using a homomorphism $\varphi \co \pi_1(M) \to \Z_2.$ Then all tetrahedra in a layered solid torus in $M$ are either of type $\Tq$ or type $\Te$, but not both.
\end{lemma}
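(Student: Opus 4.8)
The plan is to reduce everything to the restriction of $\varphi$ to the solid torus and then to exploit the fact that a single layering step determines the colours of the new tetrahedron completely. Write $\lst$ for the layered solid torus, let $i \co \lst \hookrightarrow M$ be the inclusion, and set $\psi = \varphi \circ i_* \co \pi_1(\lst) = \Z \to \Z_2$. Since $\pi_1(\lst)$ is infinite cyclic, $\psi$ is either trivial or surjective, and the colour of every edge of $\lst$ is $\psi$ applied to its class; equivalently, writing $\ell(e) \in \Z$ for the class of $e$ in $H_1(\lst) = \Z$, the edge $e$ is $\varphi$--odd exactly when $\psi$ is non-trivial and $\ell(e)$ is odd. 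If $\psi$ is trivial then every edge of $\lst$ is $\varphi$--even and every tetrahedron of $\lst$ is of type $\Te$, so there is nothing to prove. I will therefore assume $\psi$ is non-trivial.

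First I would record the colours of the three boundary edges at each stage. The boundary of $\lst$ carries a one--vertex, two--triangle triangulation of the torus, whose three edges $e_1, e_2, e_3$ satisfy $e_3 = e_1 + e_2$ in $H_1(\partial \lst) = \Z^2$; pushing into $H_1(\lst)$ gives $\ell(e_3) = \ell(e_1) + \ell(e_2)$, so the three boundary colours sum to zero and their multiset is either $\{0,0,0\}$ or $\{0,1,1\}$. Moreover, since two of the $e_i$ form a basis of $H_1(\partial \lst)$ and $H_1(\partial \lst) \to H_1(\lst)$ is onto, the integers $\ell(e_i)$ generate $\Z$ and so cannot all be even; as $\psi$ is non-trivial the multiset is exactly $\{0,1,1\}$.

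The heart of the argument is the analysis of one layering. When a tetrahedron $\Delta$ is layered on a boundary edge $e$, two of its faces are folded, without a twist, onto the two boundary triangles meeting along $e$; these two triangles form a square with diagonal $e$ and sides $g, h$ (the other two boundary edges, each occurring twice), and the new boundary is the same square with the opposite diagonal $e'$. Labelling $\Delta = [a,b,c,d]$ so that $ab = e$ and $cd = e'$, a short check of the no--twist gluing shows that the two pairs of opposite side edges $\{ac, bd\}$ and $\{ad, bc\}$ are identified with $g$ and with $h$ respectively, so the three pairs of opposite edges of $\Delta$, namely $\{ab, cd\}$, $\{ac, bd\}$ and $\{ad, bc\}$, carry the constant colours $\ell(g) + \ell(h)$, $\ell(g)$ and $\ell(h)$ modulo $2$. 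In particular opposite edges of $\Delta$ always have equal colour, so $\Delta$ cannot be of type $\Tt$; and $\Delta$ is of type $\Te$ precisely when $g$ and $h$ are both $\varphi$--even, and of type $\Tq$ otherwise. The same computation gives $\ell(e') \equiv \ell(g) + \ell(h) \equiv \ell(e) \pmod 2$, so replacing $e$ by $e'$ leaves the multiset of boundary colours unchanged, reconfirming the invariant $\{0,1,1\}$ found above.

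To finish, observe that at every layering the two surviving boundary edges $g, h$ are two of the three boundary edges, whose colours form the multiset $\{0,1,1\}$; hence $\{g, h\}$ is coloured either $\{1,1\}$ or $\{0,1\}$, in both cases not both even, so every layered tetrahedron is of type $\Tq$. Together with the trivial case that $\psi$ is trivial (all of type $\Te$) this gives the required dichotomy, with type $\Tt$ excluded throughout. The one point requiring separate care is the base of the tower: the very first tetrahedron is layered on the degenerate solid torus (the M\"obius band $\lst(1,1,2)$) rather than on a genuine boundary torus, so the square picture above does not literally apply to it. I expect this to be the main obstacle, and I would handle it by inspecting the one--tetrahedron layered solid torus directly -- identifying its three edges and their winding numbers -- to check that its single tetrahedron is of type $\Te$ when $\psi$ is trivial and of type $\Tq$ otherwise, consistently with the inductive step.
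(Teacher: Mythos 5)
A preliminary remark: the paper does not actually prove this lemma---it is stated in \S2.6 with a citation to \cite{Jaco-minimal-2009}---so there is no in-paper proof to compare against, and your proposal has to be judged on its own merits. On those merits it is correct, and the homological framing is a clean, self-contained route. Factoring the colouring through $\psi = \varphi \circ i_*$ on $\pi_1(\lst) \cong \Z$ turns colours into parities of winding numbers; the observation that any two of the three boundary edges form a basis of $H_1(\partial\lst)$, combined with surjectivity of $H_1(\partial\lst) \to H_1(\lst)$, pins the boundary parity multiset to $\{0,1,1\}$ whenever $\psi \neq 0$; and your analysis of a no-twist layering is exactly right: the two pairs of opposite side edges of $\Delta$ are identified with $g$ and with $h$, and both diagonals carry parity $\ell(g)+\ell(h)$. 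This delivers both halves of the statement at once---opposite edges always have equal colour, which excludes $\Tt$, and when $\psi \neq 0$ the pair $\{g,h\}$ is never all-even, which forces $\Tq$; when $\psi = 0$ everything is $\Te$. The dichotomy ``$\Tq$ or $\Te$, but not both'' thus falls out of the two cases for $\psi$ rather than having to be carried along as an inductive hypothesis.

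The only loose end is the one you flagged yourself: the bottom tetrahedron is not layered on a torus, so your square picture does not apply to it, and you defer to an inspection that you do not carry out. That inspection does succeed, and for completeness here is the data. In the one-tetrahedron layered solid torus $\lst(1,2,3)$, with tetrahedron $[a,b,c,d]$ whose two back faces are folded over the M\"obius band, the edge identifications are $ab \sim bc \sim ad$ (the degree-three edge, winding number $\pm 1$ in $H_1(\lst)$), $ac \sim bd$ (the degree-two edge, winding number $\pm 2$), and $cd$ unidentified (the univalent edge, winding number $\pm 3$); one checks this either from the face relations or from the meridian intersection numbers $1,2,3$. So the three pairs of opposite edges again carry constant parities---(odd, even, odd) when $\psi \neq 0$, giving type $\Tq$, and all even when $\psi = 0$, giving type $\Te$---exactly as your inductive step requires. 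With that verification written out, your proof is complete.
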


\begin{definition}[(Types of layered solid tori)]
Assume that the triangulation $\tri$ contains a single vertex and that all edge loops are coloured using a homomorphism $\varphi \co \pi_1(M) \to \Z_2.$ A layered solid torus containing a tetrahedron of type $\Tq$ (respectively $\Te$) is accordingly termed of type $\Tq$ (respectively $\Te$). 
\end{definition}

%%%%%%%%%%%%%%%%%%%%%%%%%%%%%%%
%%%%%%%%%%%%%%%%%%%%%%%%%%%%%%%

\section{Rank one}
\label{sec:rank one}

In this section, we give the proof of Theorem~\ref{thm:main 1}.
Suppose $M$ is an orientable, irreducible, closed 3--manifold not homeomorphic with $\R P^3,$  and assume $\varphi$ is a non-trivial element of $H^1(M;\Z_2).$  By hypothesis, $\varphi$ is dual to a surface of non-negative Euler characteristic.

Suppose that $c(M) \le 1 + 2 || \ \varphi \ ||.$ To prove Theorem~\ref{thm:main 1} we need to show that $M$ is a balanced lens space. To simplify notation, we write $g-2=|| \ \varphi \ ||.$ 
We point out that \emph{if} $\varphi$ can be represented by a connected non-orientable surface, \emph{then} $g$ is its genus. Otherwise it is merely a useful symbol for us. Using this definition of $g,$ this gives the  slogan from the introduction:
$$c(M) \le 1 + 2 || \ \varphi \ || = 2g - 3 = c(L(2g,1)).$$
Suppose we have an arbitrary minimal triangulation of $M$ and colour the edges using $\varphi.$ 

\subsection{Promoting triangulations with edge flips}
\label{subsec:promoting}

We first modify the triangulation of $M$ using 4--4 bistellar flips to rule out maximal layered solid tori of special types. 
This follows the argument in \cite{Jaco-minimal-2009} on pages 173--175. We note that this does not change the number of tetrahedra, hence producing other minimal triangulations.

A maximal layered solid torus $\lst$ of type $\Tq$ containing a $\varphi$--even edge of degree three and having the property that all other $\varphi$--even edges in its interior or boundary have degree four is called \emph{supportive}. It follows from Lemma~\ref{lem:layered lens char 2} that the $\varphi$--even boundary edge $e$ of $\lst$ is its univalent edge and contained in four pairwise distinct tetrahedra in the triangulation. As in \cite{Jaco-minimal-2009}, the complex formed by the four tetrahedra around $e$ is termed an octahedron (even though it may not be embedded). There are four cases to consider for the types of tetrahedra ordered cyclically around $e,$ and we start the labelling with the tetrahedron in $\lst$ of type $\Tq$ containing $e.$ The proofs of the following statements are in \cite{Jaco-minimal-2009}, where the notation for the types is 1 instead of $\Tq$; 2 instead of $\Tt$, and 3 instead of $\Te.$
\begin{itemize}
\item[] $(\Tq,\Tq,\Tq,\Tq)$: Any 4--4 bistellar flip on the octahedron results in four tetrahedra of type $\Tt,$ and gives a triangulation with fewer supportive solid tori and the same number of tetrahedra of type $\Te.$
\item[] $(\Tq,\Tt,\Tt,\Tq)$ or $(\Tq,\Tq,\Tt,\Tt)$: Performing a specific 4--4 bistellar flip on the octahedron results in a triangulation with fewer supportive solid tori and the same number of tetrahedra of type $\Te.$
\item[] $(\Tq,\Tt,\Te,\Tt)$: Any 4--4 bistellar flip on the octahedron results in tetrahedra of types $\Tq,\Tq,\Tt,\Tt,$ and hence reduces the number of tetrahedra of type $\Te.$ \end{itemize}
It follows by induction that after finitely many edge flips, we may assume that the triangulation does not contain any supportive solid tori.

\subsection{The fundamental equation}

Let
\begin{itemize}
\item[] $\nq=$ number of tetrahedra of type $\Tq$,
\item[] $\nt=$ number of tetrahedra of type $\Tt$,
\item[] $\ne=$ number of tetrahedra of type $\Te$,
\item[] $\even(\tri)=$ number of $\varphi$--even edges,
item[] $\tilde{\even}(\tri)=$ number of pre-images of $\varphi$--even edges in $\widetilde{\Delta}.$
\end{itemize}

%%%%%%%%%%%%%%%%%%%%%%%%%%%%%%%

Since the triangulation is 0--efficient, no component of $S_\varphi=S_\varphi(\tri)$ is a sphere or a projective plane. Hence 
$$2-g = - || \ \varphi \ || \ge \chi (S_\varphi)$$
with equality if $S_\varphi$ is taut. If $S_\varphi$ is not geometrically incompressible but admits $k_\varphi$ pairwise disjoint independent compression discs, then this inequality may be written as:
$$2-g = - || \ \varphi \ || \ge \chi (S_\varphi) + 2k_\varphi$$

Consider the complex $K$ spanned by the $\varphi$--even edges. Then taking the normal surface corresponding to twice the normal coordinate of $S_\varphi$ gives the boundary of a small regular neighbourhood of $K.$ Hence
$$2 \chi(S_\varphi) = \chi(\partial K) = 2 \chi(K)  
= 2 \big( 1 - \even + \frac{\nt + 4 \ne}{2} - \ne \big) 
=  2 - 2 \even + {\nt} +  2\ne 
$$
This gives:
$$2(2-g) \ge 2\chi (S_\varphi) + 4k_\varphi= 2 - 2\even + \nt +  2\ne+ 4k_\varphi.$$

The number $c(M)$ of all tetrahedra satisfies $$2g-3 \ge c(M) = \nq + \nt + \ne.$$
We also have
$\sum d \even_d = \tilde{\even} = 2 \nq + 3 \nt + 6 \ne.$
This gives:
\begin{align*}
2 = 4(2-g) + 2(2g-3)
\ge& 4 \chi(S_\varphi) + 8 k_\varphi+ 2 c(M)\\
=& 4 + 8 k_\varphi + \sum (d-4) \even_d + \nt.
\end{align*}
We may assume that $M$ is not the lens space $L(4,1)$ since in this case the conclusion follows by inspection of the triangulation with a single tetrahedron of $L(4,1).$ Hence the smallest degree of an edge in $M$ is 3. Rearranging the inequality gives:
\begin{equation}\label{inequ for min tri}
\even_3 \ge 2 + \sum_{j=5}^{\infty} (j-4) \even_j + 8 k_\varphi + \nt.
\end{equation}

We first remark that if $M$ is a lens space \emph{and} the minimal triangulation is a layered triangulation, then a direct argument shows that $M$ is a balanced lens space. This comes from the fact that in a minimal layered triangulation there are at most two edges of degree three. Hence inequality (\ref{inequ for min tri}) must be an equality, $\even_3=2$ and all other terms must vanish. In particular, apart from the two $\varphi$--even edges of degree 3 all other $\varphi$--even edges have degree 4. The lens spaces satisfying this condition can now be constructed and identified (see \S2.4, Corollary 30 and Figure 6 in \cite{Jaco-minimal-2009}).

Hence we assume that it is not the case that $M$ is a lens space and the minimal triangulation is a layered triangulation. We aim to show that this gives a contradiction. Since the triangulation has at least two even edges of degree three, Proposition~\ref{pro:degree three edges} and Lemma~\ref{lem:intersection of maximal layered solid tori} imply that there are at least two maximal layered solid tori, and that any two maximal layered solid tori meet in at most one edge.

\subsection{Deficit from maximal layered solid tori of type $\Te$}

As in \cite{Jaco-minimal-2009}, one sees that the collection of maximal layered solid tori of type $\Te$ containing a degree 3 edge contribute a \emph{deficit} to the inequality (\ref{inequ for min tri}), in the sense that the contribution to the right hand side from $\sum_{j=5}^{\infty} (j-4) \even_j$ exceeds the contribution to the left hand side $\even_3.$ The argument is as follows. Any maximal layered solid tori of type $\Te$ having an internal edge of degree at least 5,  either \emph{balances} out or contributes a \emph{deficit} to the inequality (\ref{inequ for min tri}). Hence assume we are left with all such tori having one internal edge of degree 3 and all others of degree 4. The edges on the boundary have internal degrees $1, 3, 3+2m,$ where $m \ge 1.$ The edge $e_0$ of internal degree $3+2m$ is the longitude of the layered solid torus. Now consider the collection of all maximal layered solid tori of type $\Te$ meeting in this longitude $e_0$ (some may meet it in their longitude, some in another edge). Whence the degree in $M$ of the longitude is at least 6 (in fact, at least 7). Write this as $5 + k,$ where $k\ge 1.$ Now at most $\frac{k+1}{2}$ layered solid tori can meet in this edge, since no two meet in more than one edge. Hence the contribution to the left hand side is at most $\frac{k+1}{2}$ (and this may be double counting some degree 3 edges), and the contribution to the right hand side is $d(e_0) - 4 = 1+k,$ where $k \ge 0,$ Whence one obtains a deficit since $\frac{k+1}{2} < k+1.$

It follows that there must be at least two maximal layered solid tori of type $\Tq,$ and that from the collection of all such maximal layered solid tori there must be a \emph{gain} of at least two.

\subsection{No gain from maximal layered solid tori of type $\Tq$}

First note that given any maximal layered solid tori of type $\Tq$ having an internal $\varphi$--even edge of degree at least five, then it either \emph{balances} out or contributes a \emph{deficit} to the inequality (\ref{inequ for min tri}). Hence we now restrict to the collection of all maximal layered solid tori of type $\Tq$ containing a $\varphi$--even edge of degree three and having all other interior $\varphi$--even edges of degree four. Since there are no supportive tori (cf. \S\ref{subsec:promoting}), we know that the even boundary edge $e$ of such a torus $\lst$ has degree at least five in $M.$ We call $\lst$ an \emph{almost supportive torus}.

If $e$ is not contained in any other almost supportive torus, then again we have either a balance or a deficit. This shows that there is some almost supportive torus $\lst$ with the property that it shares its even boundary edge with other almost supportive tori.

If $k$ almost supportive tori meet in an even degree edge $e$ of degree $d,$ then $d \ge 2k$ by Lemma~\ref{lem:intersection of maximal layered solid tori}. 
For at least one collection of such tori, there must be a positive contribution. Hence $k \ge 1 + d-4 = d-3$ and so $k+3 \ge d \ge 2k$ for at least one collection. This gives $k\le 3.$ Since we also have $d\ge 5,$ we have the following cases:
\begin{enumerate}
\item $d=6$ and $k=3;$
\item $d=5$ and $k=2.$
\end{enumerate}
Assume that $d=6$ and $k=3.$ It follows that $e$ is contained in six pairwise distinct tetrahedra, with supportive tori alternating around $e.$ All these tetrahedra have to be of type $\Tq.$ Thus, there is a compression disc for $S_\varphi$ associated with $e.$ This compression disc has boundary on the six quadrilateral discs in the tetrahedra containing $e;$ see Figure~\ref{fig:compression}. This contributes $8$ to the right hand side of the equation. We now adjust the contribution of $e$ and the even edges in the boundary of the abstract neighbourhood as follows. Each $\varphi$--even edge not in the interior of an almost supportive torus is given a contribution of 1 (leaving us with at least 4 to still balance). We then perform the compression, resulting in a new normal surface. However, we do not change the types of the tetrahedra. Note that in each tetrahedron associated with the compression, one normal quadrilateral is replaced with two normal triangles and the two even edges of the tetrahedron are adjusted with an extra weight of $1,$ i.e. increasing the contribution of its edge degree by one. The resulting surface is normal and since the triangulation is 0--efficient and $M$ is not $\R P^3,$ no resulting component from the compression is a sphere or projective plane. We also note that if one of the other even edges involved in this process is of the type ($d=6$ and $k=3$) or ($d=5$ and $k=2$), then it now is counted as ($d+1=6+1=7$ and $k=3$) or ($d+1=5+1=6$ and $k=2$) and hence gives a balance or deficit. See Figure~\ref{fig:compression} for details.

\begin{figure}[htb]
  \centerline{\includegraphics[width=.7\textwidth]{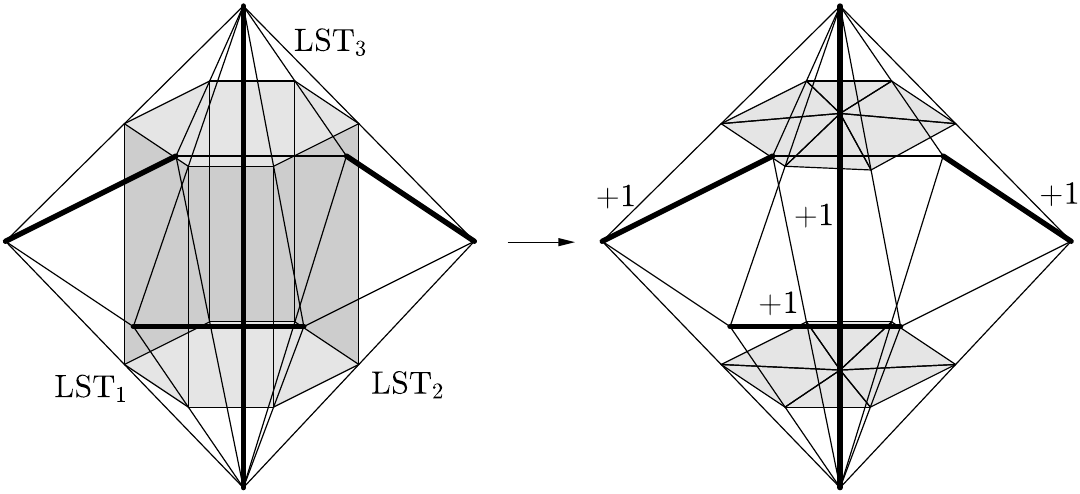}}
  \caption{Compression in the case $d=6$ and $k=3$. A similar situation occurs in the case $d=5$ and $k=2$. \label{fig:compression}}
\end{figure}

Thus, for each edge of type ($d=6$ and $k=3$) we either perform a compression of $S_\varphi$ or, if any of the tetrahedra in its link have already been used in a compression of $S_\varphi$, we do not obtain a gain from it. Also note that this process never splits off spheres or projective planes. 

Hence there must be an edge $e$ of type ($d=5$ and $k=2$) that we have not considered yet. The abstract neighbourhood of $e$ contains three tetrahedra of type $\Delta_q,$ namely two from the two almost supportive tori $\lst_1$ and $\lst_2$, and one meeting each of these in a face. The remaining tetrahedra are either of both of type $\Delta_q,$ or both of type $\Delta_t.$ We first show that they must be distinct.

First consider the case where all tetrahedra are of type $\Delta_q.$ Then the tetrahedra are pairwise distinct since otherwise $\lst_1$ and $\lst_2$ meet in more than one edge. Hence there is a compression disc for $S_\varphi$ associated with the edge $e$, see Figure~\ref{fig:compression}. As in the case of ($d=6$ and $k=3$) this allows us to distribute an extra weight to each of the edges $e$ and the $\varphi$--even edges not in the interior of $\lst_1$ or $\lst_2$ but incident with the collection of five tetrahedra around $e.$

Next, consider the case where three tetrahedra in the abstract edge neighbourhood of $e$ are of type $\Delta_q,$ and the remaining two of type $\Delta_t.$ The tetrahedra of type $\Delta_q$ are pairwise distinct since each of $\lst_1$ and $\lst_2$ contains exactly one of them and the remaining shares a face with each. The tetrahedra of type $\Delta_t$ are distinct since each has only three $\varphi$--odd edges, and the union of their $\varphi$--odd edges contains the four $\varphi$--odd edges in the boundaries of $\lst_1$ and $\lst_2.$ Hence there is a contribution of 2 to the right hand side of inequality (\ref{inequ for min tri}) from the two tetrahedra of type $\Delta_t.$ 

The two tetrahedra of type $\Delta_t$ meet in a face containing three $\varphi$--even edges. If not all of these are of the type ($d=5$ and $k=2$) and have not been visited before, then we obtain a total balance or deficit. Hence assume all three are of type ($d=5$ and $k=2$) and have not been visited before. In this case, we have two normal triangles (from the two tetrahedra of type $\Delta_t$ and nine normal quadrilaterals (three each from the three  $\varphi$--even edges of degree 5) forming a subsurface of $S_\varphi$ that is a thrice punctured sphere. This comes from the fact that no two of the $\varphi$--even edge can now be identified since otherwise two maximal layered solid tori meet in two edges. We replace this thrice punctured sphere by three discs---this corresponds to two compressions and an isotopy on $S_\varphi.$ The resulting surface is again normal: in each tetrahedron of type $\Delta_t$ we replace the unique normal triangles by the other three normal triangles; and in each tetrahedron of type $\Delta_q$ involved in the links of the three $\varphi$--even edges we replace the normal quadrilateral by the two normal triangles meeting the $\varphi$--even edge on the boundary of the layered solid torus. On the level of normal surface theory, this amounts to adding to the normal coordinate of $S_\varphi$ the \emph{edge solutions} corresponding to the three $\varphi$--even edges and subtracting the \emph{tetrahedral solutions} corresponding to the tetrahedra of type $\Delta_t$ (see \cite[\S2.5-2.6]{Luo-angle-2008}). Here, our convention is that edge and tetrahedral solutions have negative quadrilateral coordinates. Then the resulting normal coordinate has non-negative coordinates, the same edge weights modulo 2 as $S_\varphi$ (hence representing the same class) and its Euler characteristic is $\chi(S_\varphi)+4,$ since tetrahedral solutions have formal Euler characteristic 1 and edge solutions have formal Euler characteristic 2. See Figure~\ref{fig:doubleCompression} for an illustration of this process. Now the overall contribution to the left hand side is 6, and the overall contribution to the right hand side is 3 from the degree 5 edges, plus 2 from the type $\Delta_t$ tetrahedra and 16 from the two compressions. Hence we have an overall deficit associated with the three type ($d=5$ and $k=2$) edges. 

\begin{figure}[htb]
  \centerline{\includegraphics[width=\textwidth]{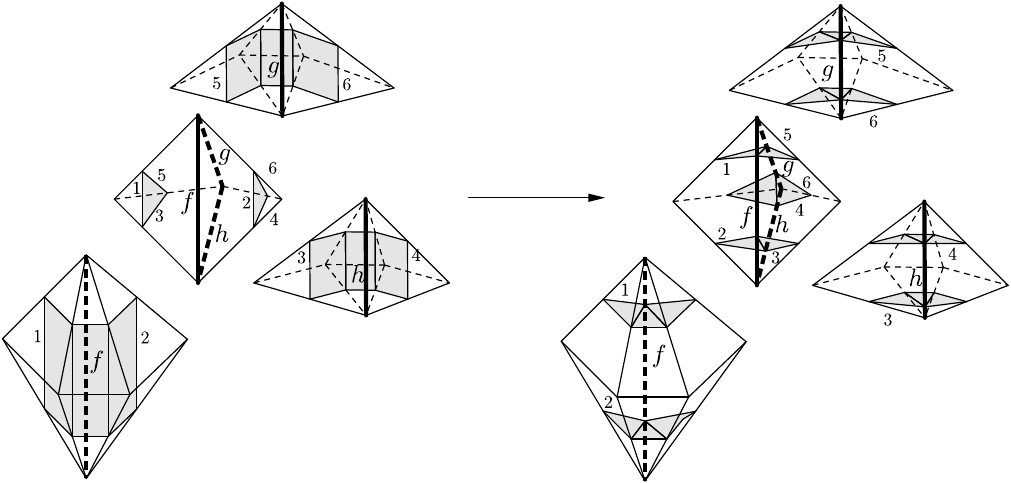}}
  \caption{The double compression in the case $d=5$ and $k=2$. \label{fig:doubleCompression}}
\end{figure}

It follows that no type ($d=5$ and $k=2$) edge can result in a gain, and hence we have a contradiction to our assumptions. This completes the proof of Theorem~\ref{thm:main 1}.

%%%%%%%%%%%%%%%%%%%%%%%%%%%

\section{Rank two}
\label{sec:rank 2}

Before we state our second main result, we describe the \emph{twisted squares} from \cite{Luo-combinatorial-2017}. Two pairs of opposite edges in a tetrahedron are the boundary of a quadrilateral properly embedded in the tetrahedron. If opposite pairs of these edges are identified, then one either obtains a pinched projective plane, an embedded Klein bottle or an embedded torus. These are shown in Figure~\ref{fig:cases} and the latter two are called, respectively, a \emph{Klein square} and a \emph{torus square}.
 
 \begin{figure}[htb]
    \centering{\includegraphics[width=.7\textwidth]{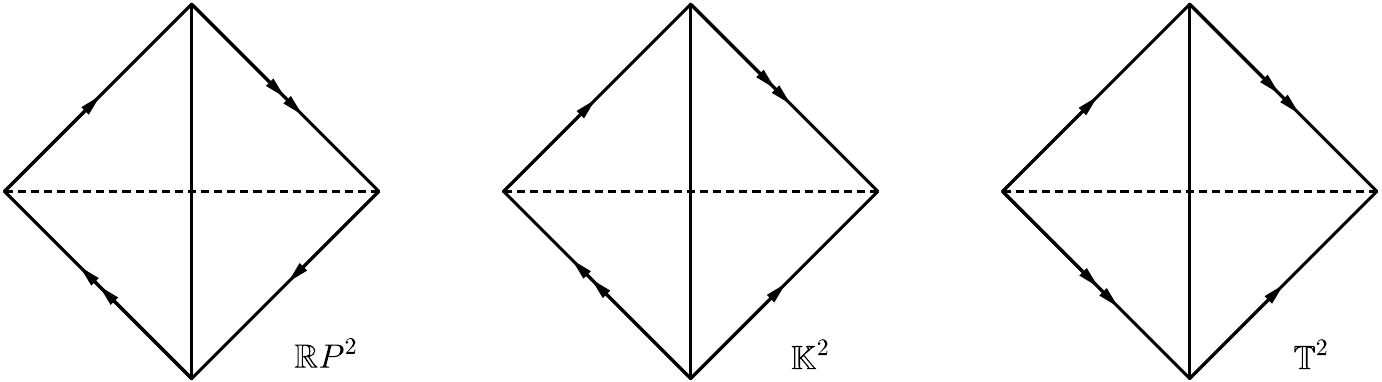}}
    \caption{Tetrahedron with two pairs of opposite edges identified forming a twisted square of type \emph{real projective plane} (left), \emph{Klein bottle} (center) or \emph{torus} (right). \label{fig:cases}}
  \end{figure}

As discussed in the introduction, replacing the use of the lemmata in  \cite{Jaco-Z2-2013} that assume $M$ atoroidal with an improvement of the counting argument analogous to \S\ref{sec:rank one} by taking into account compression discs shows that if $H \le H^1(M;\Z_2)$ is a subgroup of rank two, then 
$$c(M) \ge 2 + \sum_{0 \neq \varphi \in H} ||\;\varphi\;||.$$ 
A detailed argument is given by Nakamura~\cite{Nakamura-complexity-2017}.
Our next main result characterises precisely those manifolds that realise the lower bound.
Theorem~\ref{thm:main 2} is a direct corollary of this result.

\begin{theorem}\label{thm:main 3} 
Let $M$ be a closed orientable, irreducible, connected 3--manifold. Suppose that $H \le H^1(M;\Z_2)$ is a subgroup of rank two.  If $c(M) = 2 + \sum_{0 \neq \varphi \in H} ||\;\varphi\;||,$ then every minimal triangulation contains a tetrahedron with a Klein square or a torus square. Moreover, either
\begin{enumerate}
\item $M$ is a generalised quaternionic space and $H = H^1(M;\Z_2)$; or
\item the rank of $H^1(M;\Z_2)$ is at least three and a Klein square is an incompressible surface in $M;$ or
\item the rank of $H^1(M;\Z_2)$ is at least three and a torus square is an incompressible non-separating surface in $M.$
\end{enumerate}
\end{theorem}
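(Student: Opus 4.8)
The plan is to run the counting argument of \S\ref{sec:rank one} simultaneously for the three non-trivial classes $\varphi_1,\varphi_2,\varphi_3=\varphi_1+\varphi_2$ spanning $H$, and to read off the rigidity forced by the equality hypothesis. First I would fix a minimal triangulation $\tri$ and, as in \S\ref{subsec:promoting}, apply $4$--$4$ flips to eliminate supportive solid tori, treating the three classes in turn and checking that a flip improving one colouring does not reintroduce bad tori for the others. Colouring every edge by the pair $(\varphi_1[e],\varphi_2[e])\in\Z_2^2$ refines the tetrahedron types: for each $\varphi_i$ a tetrahedron is of type $\Te$, $\Tt$ or $\Tq$, and the distinguished \emph{rainbow} tetrahedron is the one of type $\Tq$ for all three $\varphi_i$ at once, i.e.\ the one whose three pairs of opposite edges carry the three distinct non-zero colours $(0,1),(1,0),(1,1)$. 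In such a tetrahedron the normal quadrilaterals of $S_{\varphi_1},S_{\varphi_2},S_{\varphi_3}$ are exactly the three quadrilateral types, and the quadrilateral properly embedded in it bounded by two pairs of opposite edges is precisely the \emph{twisted square} of the statement; when those two pairs of opposite edges are identified in $M$ it closes up to a Klein square, a torus square, or a pinched projective plane.

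Next I would assemble the fundamental equation for each $\varphi_i$ as in \S\ref{sec:rank one}, using $2\chi(S_{\varphi_i})=2-2\even_i+\nt^{(i)}+2\ne^{(i)}$ together with the tautness/compression inequality $2-g_i\ge\chi(S_{\varphi_i})+2k_{\varphi_i}$, and sum the three resulting inequalities. The hypothesis $c(M)=2+\sum_{0\neq\varphi\in H}||\,\varphi\,||$ is exactly the assertion that the summed inequality is an equality, which forces every intermediate inequality to be tight: each $S_{\varphi_i}$ is $\Z_2$--taut, the slack terms $\sum_{j\ge5}(j-4)\even_j^{(i)}$, $\nt^{(i)}$ and $k_{\varphi_i}$ collapse to their extremal values, and the degree-$3$ and degree-$4$ edge patterns are pinned down just as in the rank one equality case. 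I would then argue, via the deficit/gain bookkeeping of \S\ref{sec:rank one} carried for all three classes at once (they interact through the shared colouring), that there is no room for a genuine gain unless a compression of some $S_{\varphi_i}$ is forced, and that a forced compression can only occur along an edge whose link is a rainbow configuration. Tightness then forces the two relevant pairs of opposite edges of a single tetrahedron to be identified in $M$, producing a twisted square; $0$--efficiency (valid here since the rank hypothesis excludes $S^3,\R P^3,L(3,1)$) rules out the pinched projective plane, leaving a Klein square or a torus square. As this applies to an arbitrary minimal triangulation, it proves the first assertion.

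For the trichotomy I would split according to $\operatorname{rank} H^1(M;\Z_2)$. If the rank equals two, so that $H=H^1(M;\Z_2)$, I would use the tight edge-degree data to reconstruct $\tri$ globally: any two maximal layered solid tori meet in at most one edge by Lemma~\ref{lem:intersection of maximal layered solid tori}, and the rainbow tetrahedron forces the layering to close up with a twist, exhibiting $\tri$ as a twisted layered loop triangulation; matching the resulting space against the list in \cite{Jaco-coverings-2011} identifies $M$ as a generalised quaternionic space. If instead $\operatorname{rank} H^1(M;\Z_2)\ge3$, there is a $\Z_2$--class outside $H$, and I would show the twisted square is incompressible: a compressing disc for the Klein bottle (respectively torus) would, after surgery, either drop $\chi_{-}$ below $||\,\varphi\,||$ for the class it represents, contradicting tautness, or split off a sphere, contradicting irreducibility; in the torus case the carried class is non-trivial in $H^1(M;\Z_2)$, so the torus is non-separating.

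The hard part will be the second step's propagation of equality into a purely local statement, namely that tightness does not merely bound the number of compressions but pins their location at a single tetrahedron whose opposite edges are identified in the two required ways. The rank one analysis already needed delicate single-, double- and triple-edge compression arguments (Figures~\ref{fig:compression} and~\ref{fig:doubleCompression}); here the three colourings must be tracked together, and the subtle point is that a ``gain'' for one class can be cancelled only by a compression that is geometrically the same twisted square as seen by another class. Establishing incompressibility in the rank $\ge3$ case is the second delicate point, since it requires ruling out compressions using the taut representatives of classes lying outside $H$ rather than any direct combinatorial count.
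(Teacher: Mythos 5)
There is a genuine gap, and it sits exactly where you flagged "the hard part": your mechanism for producing the twisted square cannot work. You propose to derive it from compressions that are \emph{forced} by tightness, but in the equality case there are no compressions to localise: the hypothesis $c(M)=2+\sum_{0\neq\varphi\in H}||\,\varphi\,||$ forces every canonical surface $S_{\varphi}$ to be $\Z_2$--taut and every $k_{\varphi}=0$ (this is the content of the rank-two lower bound from \cite{Jaco-Z2-2013}, \cite{Nakamura-complexity-2017}, which the paper quotes rather than re-derives). The paper's proof then uses tautness \emph{positively}, via an idea absent from your proposal: for an edge $e$ in the kernel of $\varphi$, raise its edge weight from $0$ to $2$ to get a new normal surface $S_\varphi^b$ representing the same class, with $\chi(S_\varphi^b)=\chi(S_\varphi)-2\mathfrak{o}(b)+2|b|$, where $\mathfrak{o}(b)$ counts octagons. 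Tautness gives $\mathfrak{o}(b)\ge|b|\ge 1$, and an octagon for a single-edge modification exists only if some tetrahedron has a \emph{pair of opposite edges both identified to $e$}. Since every edge lies in the kernel of some $0\neq\varphi\in H$, every one of the $c(M)+1$ edges admits such a tetrahedron, and the pigeonhole principle (strictly more edges than tetrahedra) yields one tetrahedron in which this happens for two pairs of opposite edges --- that is the twisted square. Nothing in your deficit/gain bookkeeping substitutes for this step; indeed your premise that "tightness pins the location of compressions" is vacuous when the surfaces are already taut.

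A second, more elementary error undermines your tightness propagation: summing the three rank-one inequalities $c(M)\ge 1+2||\,\varphi_i\,||$ gives only $c(M)\ge 1+\tfrac{2}{3}\sum||\,\varphi_i\,||$, which is strictly weaker than the rank-two bound; hence the hypothesis does \emph{not} make your summed inequality an equality, and no slack terms are forced to vanish. (The genuine rank-two bound needs the joint count $\sum_i \even_i \approx E = c(M)+1$, with care about edges even for all three classes, which is exactly the analysis of \cite{Jaco-Z2-2013} and \cite{Nakamura-complexity-2017}.) Finally, your treatment of the trichotomy also diverges in substance: in the rank-two branch the paper does not reconstruct the triangulation globally, but observes that a Klein square whose class lies in $H$ makes $M$ a prism manifold by Rubinstein's one-sided Heegaard splitting theorem \cite{Rubinstein-one-sided-1978} and then invokes Proposition~\ref{pro:prism}; and in the torus case incompressibility is proved by noting that two edges of the torus carry distinct non-zero elements of $\Z_2\oplus\Z_2$, so $\mathrm{im}(\pi_1(T^2)\to\pi_1(M))$ must be $\Z\oplus\Z$, with non-separation from the half-lives half-dies argument --- your suggested appeal to tautness fails here because the class carried by the twisted square need not lie in $H$.
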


\begin{proof}
Suppose we have a triangulation $M$ realising the bound  $c(M) = 2 + \sum_{0 \neq \varphi \in H} ||\;\varphi\;||$ for $H \le H^1(M;\Z_2)$ subgroup of rank $2$. It follows from \cite{Jaco-Z2-2013} (see also \cite{Nakamura-complexity-2017}) that, given $0 \neq \varphi \in H$, then all tetrahedra of $M$ are of type $\Tq$ and the canonical normal representative $S_\varphi$ is taut.

Denote the edges in the kernel of $\varphi$ by $e_1, \dots , e_k$. We identify vectors in $\Z_2^{c(M)+1}$ with subsets of edges of $M$ in the natural way. It follows that for each $b \in \Z_2^k \subset \Z_2^{c(M)+1}$ we can modify $S_\varphi$ in the following way: for every $b_i = 1$, increase the edge weight of $e_i$ from $0$ to $2$ and leave all other edge weights the same. This gives a normal surface $S_\varphi^b$ still representing $\varphi$. The new surface intersects the tetrahedra of $M$ in three possible ways illustrated in Figure~\ref{fig:modifyS}. We have for the Euler characteristic of $S_\varphi^b$:
$$ \chi (S_\varphi^b) = \chi(S_\varphi) -2 \mathfrak{o} (b) + 2 |b|$$
where $\mathfrak{o} (b)$ denotes the number of octagons in $S_\varphi^b$ and $|b|$ is the $L^1$--norm.

  \begin{figure}[htb]
    \centering{\includegraphics[width=.7\textwidth]{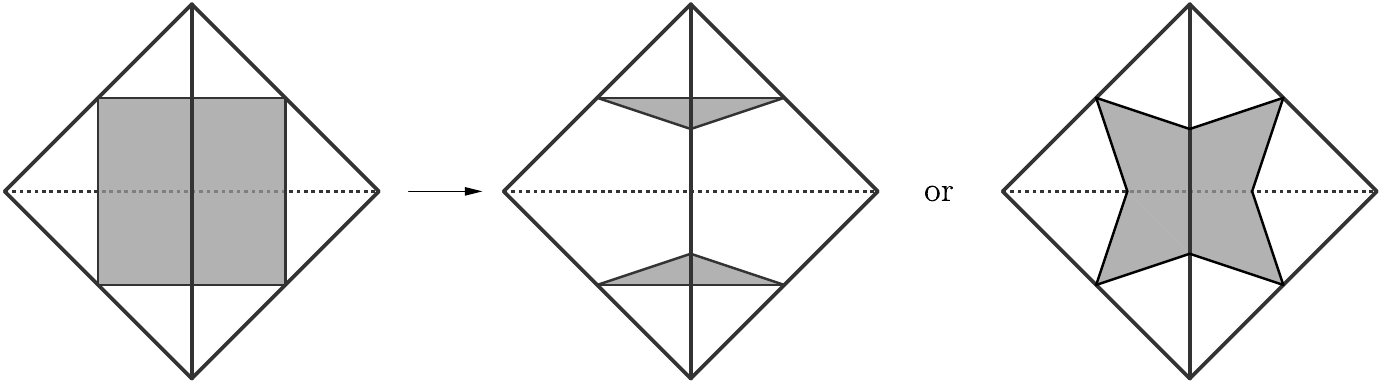}}
    \caption{A quadrilateral disc in $S$ changes to two normal triangles or one normal octagon, depending on whether only one edge opposite the quadrilateral has its weight increased to two or both. \label{fig:modifyS}}
  \end{figure}

Since $S_\varphi$ is taut we have $\chi (S_\varphi) \geq \chi (S_\varphi^b) $ and hence $\mathfrak{o} (b) \geq |b|$. 

Since each edge in the triangulation $M$ is in the kernel for some non-trivial $\varphi \in H$, we know that each unit vector in $\Z_2^{c(M)+1}$ gives at least one octagon. Each unit vector in $\Z_2^{c(M)+1}$ corresponds to one edge and hence there exists at least one tetrahedron with a pair of opposite edges identified with this edge. Since there are strictly more edges than tetrahedra, the pigeonhole principle implies that there exists at least one tetrahedron in which this happens for at least two pairs of opposite edges.

This leaves us with three cases illustrated in Figure~\ref{fig:cases} giving rise to three types of \emph{twisted squares} in the respective tetrahedra. In the first case, the twisted square forms a pinched projective plane in $M$. As in Lemma~15 of \cite{Luo-combinatorial-2017} this can be surgered to give an embedded projective plane which implies that $M \cong \R P^3$, contradicting our hypothesis on the rank of $H$. 

In the second case the twisted square forms an embedded Klein bottle in $M$. This must be non-separating and incompressible and thus there exists a class in $H^1 (M, \Z_2)$ with norm zero. If this class is in $H,$ then one of the quadrilateral surfaces is a Klein bottle and $M$ has a Klein bottle as a 1--sided Heegaard splitting surface, and hence is a prism manifold by \cite{Rubinstein-one-sided-1978}. In this case we apply Proposition~\ref{pro:prism} given below to obtain the first conclusion. If this class is not in $H,$ then we have the second conclusion.
 
In the third case, the twisted square forms an embedded torus $T^2$ in $M.$ Denote $\Gamma = \text{im}(\pi_1(T^2) \to \pi_1(M)).$ This is either equal to $\Z\oplus\Z,$ $\Z,$ or the trivial group. However, two of the edges are contained in the torus and hence $\Gamma$ has a surjection onto $\Z_2\oplus\Z_2$ when mapping $\pi_1(M)$ onto $H_1(M;\Z_2).$ This implies that the torus is incompressible. Moreover, application of the standard half-lives half-dies argument to a component of $M\setminus T^2$ shows that $T^2$ must be non-separating. Hence there is a loop in $M$ transverse to $T^2$ and meeting it in a single point showing that the rank of $H_1(M;\Z_2)$, and hence the rank of $H^1(M;\Z_2)$, is at least 3.
\end{proof}

%%%%%%%%%%%%%%%%%%%%%%%%%%%

\section{Examples}
\label{sec:examples}

In this section, we give the examples described in the introduction, and discuss methods to obtain more families.

%%%%%%%%%%%%%%%%%%%%%%%%%%%

\subsection{Lens spaces}
\label{subsec:lens spaces}

We use the equations given in \cite{Jaco-minimal-2009}. Suppose that $c(L) = 2 + 2 ||\;\varphi\;||.$
For the canonical surface, we have $\chi(S_\varphi) = 1-\even$ and the number of tetrahedra is $\even + \odd  - 1.$ It also follows by inspection that $||\;\varphi\;|| = - \chi(S_\varphi) = \even -1.$ Whence $c(L) = 2 \even.$

In our fundamental Equation~(\ref{inequ for min tri}), this gives $\sum(d-4)\even_d = 0$ and since $\even_3\le 2,$ we have four possibilities, which are summarised as follows:
\begin{enumerate}
\item $\even_3 =0.$ This implies that only $\even_4 \neq 0.$ This cannot happen.
\item $\even_3 = 1.$ This forces $\even_5=1$ and all other $\varphi$--even edges are of degree 4. This gives the first infinite family of lens spaces.
\item $\even_3 = 2.$ We have ($\even_5=2$ or $\even_6=1$) and all other $\varphi$--even edges are of degree 4. The former case is again not possible. The latter case results in the second infinite family of lens spaces.
\end{enumerate}

We indicate the families of lens spaces satisfying these conditions by adding these new families to the L-Graph in Figure 7 of \cite{Jaco-minimal-2009} and given here in Figure~\ref{fig:tree}. The fraction $p/q, q > p  > 0$, at a vertex of the L-graph corresponds to the one-vertex triangulation on the boundary of the solid torus for the triple $(p,q,p+q)$. The unique minimal path from $p/q$ to $1/1$ describes the unique minimal layered triangulation of the solid torus extending the $(p,q,p+q)$ triangulation on the boundary. At the vertex $p/q$ there are two possible lens spaces determined with minimal layered  triangulations (see Figure~\ref{fig:lgraph} on how to extend the L-graph at $p/q$). We obtain the triangulation of a lens space by what we call folding along either the edge labeled $p$ or the edge labeled $q$ in the boundary of the layered solid torus. In our situation, we require either $p$ or $q$ to be even and fold along the even edge getting the lens space $L(2q+p,q), p$ even, and  $L(2p+q,p)$, $q$ even. 

\begin{figure}[htp]
 \centering{\includegraphics[width=.4\textwidth]{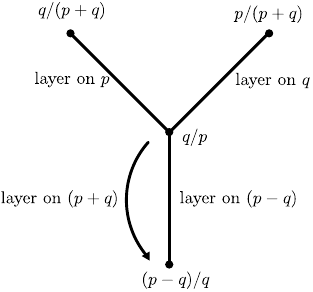}}
 \caption{Extending the L-graph at vertex $p/q$. \label{fig:lgraph}}
\end{figure}

Let $\overline{\even}$  and $\overline{\odd}$  denote the number of even and the number of odd labeled edges in the layered triangulation of the layered solid torus corresponding  to the triple $(p,q,p+q),$ respectively. When viewing the L-graph we define the \emph{deficiency} at the vertex $p/q$ as $d = \overline{\even} - \overline{\odd}$.  Note that when we have either $p$ or $q$ even and we fold along the even edge at $p/q$, the resulting lens space has the same number of even edges as the layered solid torus but one less odd edge. Hence, we satisfy the conditions of Theorem 5 of \cite{Jaco-minimal-2009}, for the lens space formed by folding along an even edge at $p/q$ with $d=1$. Similarly, we satisfy the conditions for the minimal layered triangulation of a lens space to be its minimal triangulation in the two cases described above when we fold along an even edge at $p/q$ with $d=2$. 

The new entries from the results in this paper lie along the infinite subgraph in Figure~\ref{fig:tree} designated by solid edges and obtained by folding along the even edge at a vertex of valence 2 in this solid subgraph. 

\begin{sidewaysfigure}[p]
 \centering{\includegraphics[width=\textwidth]{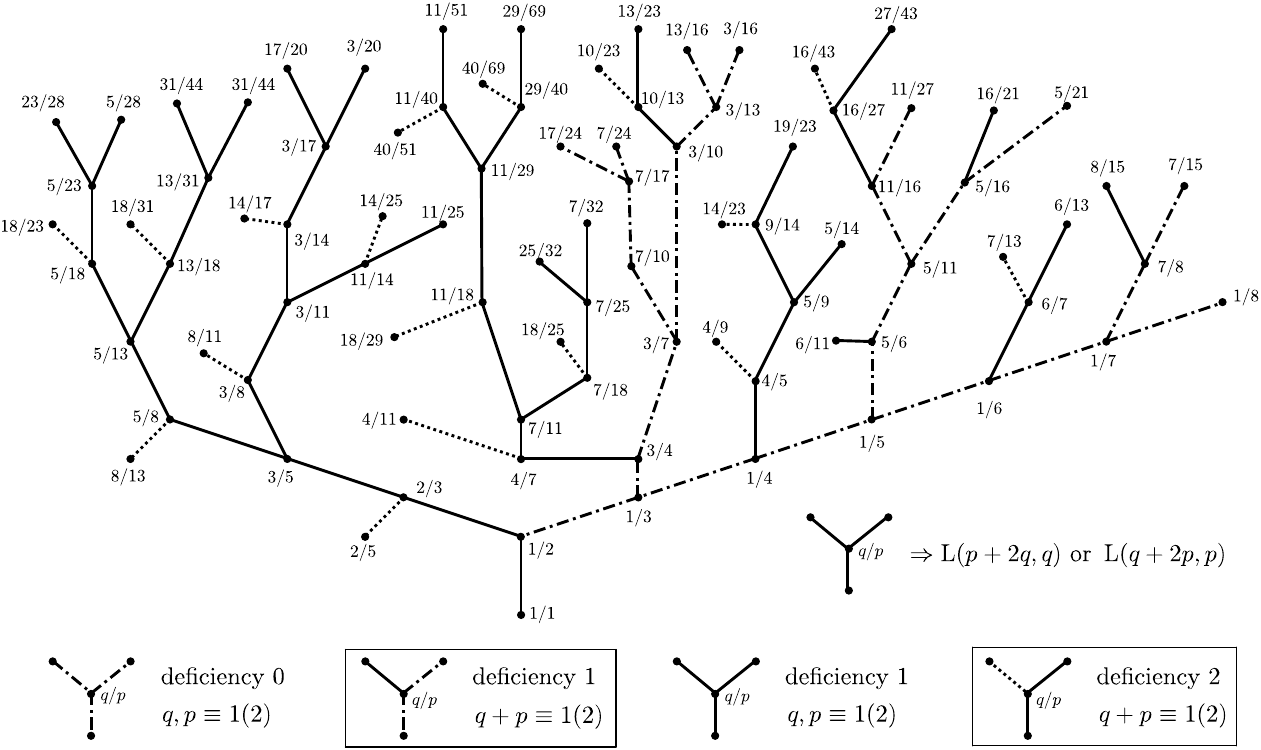}}
 \caption{L-graph of minimal layered solid torus triangulations. \label{fig:tree}}
\end{sidewaysfigure}

%%%%%%%%%%%%%%%%%%%%%%%%%%%

\subsection{Prism manifolds}
\label{subsec:prism}

It is observed in \cite{Jaco-Z2-2013} that if a triangulation of $M$ has three taut quadrilateral surfaces representing the non-trivial elements of a subgroup $H$ of $H^1(M;\Z_2)$ of rank 2, then the number of tetrahedra in that triangulation equals $2 + \sum_{0 \neq \varphi \in H} ||\;\varphi\;||,$ and hence is minimal. This characterisation is used in the next result.

\begin{proposition}\label{pro:prism}
Suppose $M$ is a prism manifold with with $H^1(M;\Z_2)$ of rank two. Then a triangulation of $M$ has three taut quadrilateral surfaces if and only if $M$ is the generalised quaternionic space
$$M_k = S^3/Q_{4k} = S^2(\ (1,-1), (2,1), (2,1), (k,1) \ ),$$
for some $k\in 2\N.$ 
\end{proposition}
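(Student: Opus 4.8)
The plan is to prove the two implications separately, throughout using the equivalence recorded at the opening of this subsection: a triangulation of a manifold $M$ with $H = H^1(M;\Z_2)$ of rank two has three taut quadrilateral surfaces if and only if it realises $c(M) = 2 + \sum_{0\neq\varphi\in H}\lVert\varphi\rVert$. Indeed, the forward direction is the observation from \cite{Jaco-Z2-2013} quoted above, and the converse holds because, by \cite{Jaco-Z2-2013} (see also \cite{Nakamura-complexity-2017}), a triangulation meeting the bound has every tetrahedron of type $\Tq$ with respect to each non-trivial $\varphi$ and each $S_\varphi$ taut, hence a quadrilateral surface.

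For the ``if'' direction I would begin with the presentation $Q_{4k} = \langle a,b \mid a^{2k}=1,\ b^{2}=a^{k},\ bab^{-1}=a^{-1}\rangle$ and abelianise: one gets $H_1(M_k;\Z) \cong \Z_2\oplus\Z_2$ precisely when $k$ is even, and $\Z_4$ when $k$ is odd, so $M_k$ has $H^1(M;\Z_2)$ of rank two exactly for $k\in 2\N$. The generalised quaternionic spaces together with their minimal twisted layered loop triangulations are constructed in \cite{Jaco-coverings-2011}; invoking that construction furnishes, for each even $k$, a triangulation of $M_k$ realising $c(M_k) = 2 + \sum\lVert\varphi\rVert$, whence three taut quadrilateral surfaces by the equivalence above.

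For the ``only if'' direction, suppose the prism manifold $M$ with $H = H^1(M;\Z_2)$ of rank two carries such a triangulation $\tri$, so that all tetrahedra are of type $\Tq$ for each of the three non-trivial classes $\varphi_1,\varphi_2,\varphi_3$, where $\varphi_3 = \varphi_1+\varphi_2$. I would first record the colouring constraint this forces: were some tetrahedron $\varphi_i$-even on the same opposite pair for two of the indices, that pair would be $\varphi$-even for all three classes and the remaining four edges $\varphi_3$-even, making the tetrahedron of type $\Te$ for $\varphi_3$, contrary to hypothesis. Hence in every tetrahedron the three pairs of opposite edges carry the three distinct non-zero colour vectors of $\Z_2^{2}$ --- precisely the twisted-square pattern of \S\ref{sec:rank 2}. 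Writing $\even_{\varphi_i}$ for the number of $\varphi_i$-even edges, the fundamental equation with $\nt=\ne=0$ gives $\chi(S_{\varphi_i}) = 1 - \even_{\varphi_i}$, so tautness yields $\lVert\varphi_i\rVert = \even_{\varphi_i}-1$; a colour count then shows every edge is $\varphi_i$-even for exactly one $i$, so that $\sum_i \even_{\varphi_i}$ equals the number of edges $c(M)+1$.

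To identify $M$, I would use that a prism manifold carries a one-sided Klein bottle \cite{Rubinstein-one-sided-1978}, whose class $\kappa$ lies in $H$ with $\lVert\kappa\rVert = 0$; by the previous paragraph its colour class is a single edge. I would then show that a minimal triangulation with all tetrahedra of type $\Tq$ and this colouring decomposes into layered solid tori arranged cyclically --- a twisted layered loop --- by locating the low-degree $\varphi$-even edges and applying the intersection analysis of \S\ref{sec:Intersections of maximal layered solid tori}, and finally match this against the enumeration of twisted layered loop triangulations of prism manifolds in \cite{Jaco-coverings-2011}, which forces the Seifert invariants to be those of $M_k$ and, with the rank two hypothesis, $k$ even. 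The main obstacle is exactly this final identification: the colouring and edge-degree bookkeeping constrain the combinatorics, but ruling out every competing rank two prism manifold whose complexity might a priori meet the bound rests on the explicit classification in \cite{Jaco-coverings-2011} and on checking that the three-taut-quadrilateral-surface condition singles out the generalised quaternionic members.
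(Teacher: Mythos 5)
Your overall framework is sound: the equivalence between ``the triangulation has three taut quadrilateral surfaces'' and ``the triangulation realises $c(M)=2+\sum_{0\neq\varphi\in H}\lVert\varphi\rVert$'' is exactly the characterisation the paper uses, your ``if'' direction (abelianising $Q_{4k}$ and citing the twisted layered loop triangulations of \cite{Jaco-coverings-2011}, \cite{Jaco-Z2-2013}) matches the paper's appeal to \cite[\S6]{Jaco-Z2-2013}, and your colouring bookkeeping (each edge $\varphi_i$--even for exactly one $i$, $\lVert\varphi_i\rVert=\even_{\varphi_i}-1$, the Klein bottle class having a single even edge) is correct. But the ``only if'' direction --- the direction the paper actually needs for Theorem~\ref{thm:main 3} --- has a genuine gap precisely where you flag it. Your plan is to show that the triangulation decomposes as a twisted layered loop via the intersection analysis of \S\ref{sec:Intersections of maximal layered solid tori}, and then to ``match against the enumeration of twisted layered loop triangulations of prism manifolds in \cite{Jaco-coverings-2011}.'' Neither step is carried out: the combinatorial decomposition is a substantial argument in its own right (the lemmas you invoke require hypotheses, e.g.\ the presence and location of low-degree even edges, that you have not established), and \cite{Jaco-coverings-2011} constructs twisted layered loop triangulations of generalised quaternionic spaces but does not contain a classification that excludes the remaining rank-two prism manifolds --- excluding them is exactly the content of the proposition, so the appeal is close to circular.

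The paper closes this gap by a topological, not combinatorial, argument, which you could adopt. Since $M$ is a prism manifold, the taut quadrilateral surface of norm zero is a one-sided Klein bottle $K$, and the triangulation is dual to the one-sided Heegaard splitting along $K$. Writing the boundary of the singular compression disc as the curve $(b^2)^qa^{2p}$ on $K$, the dual triangulation has $2pq$ tetrahedra and $M$ has Seifert invariants $(1,-1),(2,1),(2,1),(2p,q)$. The hypothesis then becomes the single equation $2pq = 2 + 0 + 2n$, where $n$ is the common norm of the two non--Klein-bottle classes. Realising $M$ as a pair of pants times $S^1$ with solid tori glued along slopes $(2,1),(2,1),(2p,q-2p)$ and using the one-sided incompressible surfaces of Frohman~\cite{Frohman-one-sided-1986} and Rubinstein~\cite{Rubinstein-one-sided-1978}, one checks that vertical representatives give $n = pq-1$ when $q=1$, and that increasing $q$ strictly decreases the norm; hence the equation forces $q=1$, i.e.\ $M = M_{2p}$ with $2p\in 2\N$. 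Without an argument of this kind (or a genuinely completed version of your combinatorial route), the forward implication remains unproved.
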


\begin{proof}
Since $M$ is a prism manifold, one of the quadrilateral surfaces is a Klein bottle, and the triangulation is dual to the 1--sided Heegaard splitting defined by this Klein bottle. Writing the fundamental group of the Klein bottle as $b^{-1} a b = a^{-1},$ a presentation for the fundamental group of the prism manifold is 
$$\langle a, b \ |\ b^{-1}ab = a^{-1}, (b^2)^q a^{2p} = 1 \rangle.$$
Here, the boundary of the singular compression disc for the Klein bottle is given by the curve $(b^2)^q a^{2p}$ on the Klein bottle. The number of tetrahedra in the dual triangulation is $2pq.$ We know that 
$$2pq = 2 + \sum_{0 \neq \varphi \in H} ||\;\varphi\;|| = 2 + 0 + 2n = 2(1+n),$$
where $n$ is the norm of the two surfaces not equal to the Klein bottle. 
The slopes on the singular fiberes are
$(1,-1), (2,1), (2,1), (2p,q)$.

Let $S$ be a pair of pants, i.e. a compact surface with three boundary components and Euler characteristic $-1.$ Then $M$ is obtained from $S \times S^1$ by gluing in three solid tori $D^2 \times S^1.$ The gluing maps are uniquely determined by giving the slopes of the curves on $D^2 \times S^1$ identified with curves $\star \times S^1$ on the boundary of $S \times S^1.$ They are $(2,1)$, $(2,1)$ and $(2p,q-2p)$. A standard calculation with Seifert invariants shows that the resulting manifold is homeomorphic by a fibre preserving map to $M.$ Then the horizontal incompressible surface described by Frohman~\cite{Frohman-one-sided-1986} is obtained by taking $S \times \star$ and extending it over each solid torus by adding the unique one-sided surface of slope $(2,1)$, $(2,1)$ and $(2p,q-2p)$ respectively.

We seek two vertical surfaces with negative Euler characteristic equal to 
$pq-1$. It follows by inspection that there are such surfaces if $q=1.$

Now if $q$ increases in the slope $(2p,q-2p),$ then the norm decreases, hence we only have the desired equality when $q=1.$ This gives precisely the family of generalised quaternionic spaces. This proves the forwards direction. The converse is shown in \cite[\S6]{Jaco-Z2-2013}.
\end{proof}

\begin{remark}
The generalised quaternionic space $M_k$, $k$ even, is also triangulated by the augmented solid torus 
$A(2,1 \mid 2,1 \mid k, 1-k).$ This has one more tetrahedron than $c(M_k).$ In the Pachner graph of all 1--vertex triangulations of $M_k$ connected by 2--3 and 3--2 moves any path from this triangulation to the minimal triangulation is of length at least $2k-11.$ The reason is that the augmented solid torus triangulation contains a layered solid torus consisting of $k-5$ tetrahedra. To remove the self-identification of the core tetrahedron requires at least $k-5$ Pachner moves of type 2--3 in any connecting sequence. Since this increases the number of tetrahedra, and the target triangulation has one tetrahedron fewer, one also needs at least $k-4$ moves of type 3--2 in any connecting sequence. In total, this amounts to at least $2k-9$ Pachner moves. 
\end{remark}

\begin{figure}
  \centerline{\includegraphics[height=4cm]{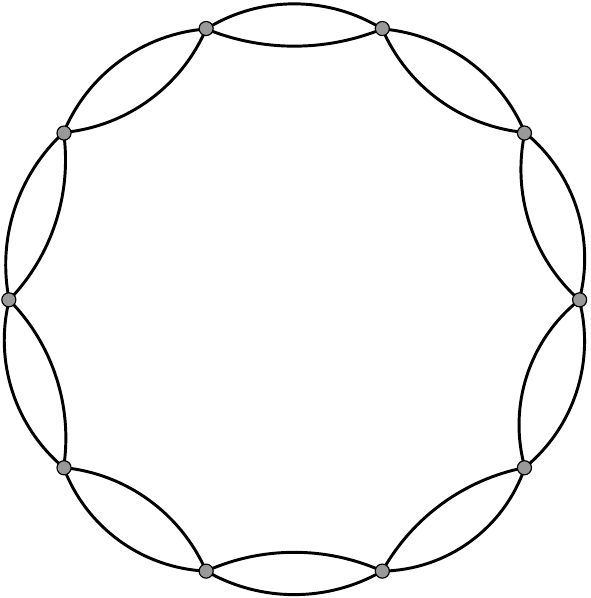} \hspace{2cm} \raisebox{.3cm}{\includegraphics[height=3.2cm]{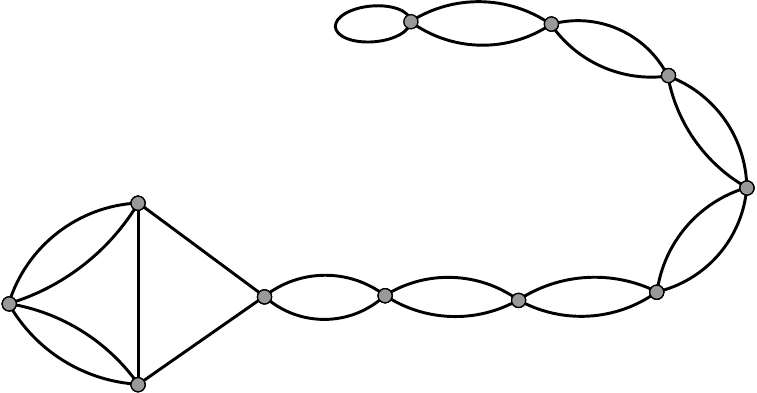}}}
  \caption{Left: Dual graph of layered loop triangulation of $M_{10}$. Right: Dual graph of augmented solid torus triangulation of $M_{10}$.}
\end{figure}

\textbf{Constructions.} We note that the generalised quaternionic spaces are obtained from an augmented solid torus by folding two pairs of its faces to obtain the Seifert invariants $(2,1),$ $(2,1)$ and then adding a layered solid torus that belongs to the subfamily $\lst(1, 1-k, k)$ of deficiency zero from \S\ref{subsec:lens spaces}. Adding a layered solid torus that belongs to the subfamily $\lst(3, 1+6k, 4+6k)$ gives the prism manifolds 
$$P_k = S^2( (1,-1), (2,1), (2,1), (4+6k,1+6k))$$
and these satisfy $c(P_k) = 3 + \sum_{0 \neq \varphi \in H} ||\;\varphi\;||,$ and hence are minimal. More families can be found this way, but we wish to emphasise that not all layered solid tori of deficiency zero result in such prism manifolds with this property.
We explain in Remark~\ref{rem:construct} how to build such examples with \emph{Regina}~\cite{regina}.

%%%%%%%%%%%%%%%%%%%%%%%%%%%

\subsection{Small Seifert fibred spaces}
\label{subsec:sSFS}

In this section we determine minimal triangulations for two infinite classes of small Seifert fibred spaces admitting $\widetilde{\SL_2(\R)}$ structures. As in \cite{Jaco-Z2-2013} we rely on work of Waldhausen~\cite{Waldhausen-klasse-1967}, Rubinstein~\cite{Rubinstein-one-sided-1978} and Frohman~\cite{Frohman-one-sided-1986} to determine the norms of the $\Z_2$--homology classes. The discussion in this paper is less detailed than in \cite{Jaco-Z2-2013}. It is a pleasant exercise, using the structure of the triangulations, to give a combinatorial proof of our claims using normal surface theory. 

\textbf{Small Seifert fibred spaces with a horizontal taut surface.}
Let $$M = M_{k,m,n} = S^2( (1,1), (2k+1,1), (2m+1,1), (2n+1,1)),$$ where $k, m$ and $n$ are positive integers. Now $H^1(M; \Z_2) \cong \Z_2$ and we show that the generator $\varphi$ is represented by an incompressible non-orientable horizontal surface of Euler characteristic $-(k+m+n).$ Hence $c(M) \ge 2(k+m+n+1).$ 

The horizontal surface is obtained as follows. On a 2--sphere choose three open discs with pairwise disjoint closures, and denote $S$ the complement of the interiors of the discs. Then $M$ is obtained from $S \times S^1$ by gluing in three solid tori $D^2 \times S^1.$ The gluing maps are uniquely determined by giving the slopes of the curves on $D^2 \times S^1$ identified with curves $\star \times S^1$ on the boundary of $S \times S^1.$ They are $(2k+1, 2k+2),$ $(2n+1, 2n+2)$ and $(2m+1, -2m).$ A standard calculation with Seifert invariants shows that the resulting manifold is homeomorphic by a fibre preserving map to $M.$ Then the horizontal incompressible surface described by Frohman~\cite{Frohman-one-sided-1986} is obtained by taking $S \times \star$ and extending it over each solid torus by adding the unique one-sided surface of slope $(2k+2, 2k+1),$ $(2n+2, 2n+1)$ and $(-2m, 2m+1)$ respectively. These surfaces have respective Euler characteristics $-k,$ $-n$ and $-m+1.$ Adding the Euler characteristic of $S$ gives a total of $-(k+m+n).$ It follows from Frohman~\cite{Frohman-one-sided-1986} that this is the unique incompressible surface representing the nontrivial class in $H^1(M; \Z_2).$

To obtain the equality $c(M) = 2 + 2 ||\;\varphi\;||$ it suffices to exhibit a triangulation with $2(k+m+n+1)$ tetrahedra. The augmented solid torus $A(2k+1,-2k-2 \mid 2m+1,-2m-2 \mid 2n+1, -1)$ is built by first triangulating a prism with three tetrahedra and then gluing the top triangle to the bottom triangle. We denote this $3$-tetrahedron $3$-vertex solid torus by $\tilde{P}$. Every vertex in $\tilde{P}$ is of degree four. See Figure~\ref{fig:prism} for a detailed picture of $\tilde{P}$ and its boundary. We then pinch the vertical edges to obtain a complex with three boundary tori with two triangles each, and only one vertex, denoted by $P$. 

  \begin{figure}[htb]
    \centering{\includegraphics[width=\textwidth]{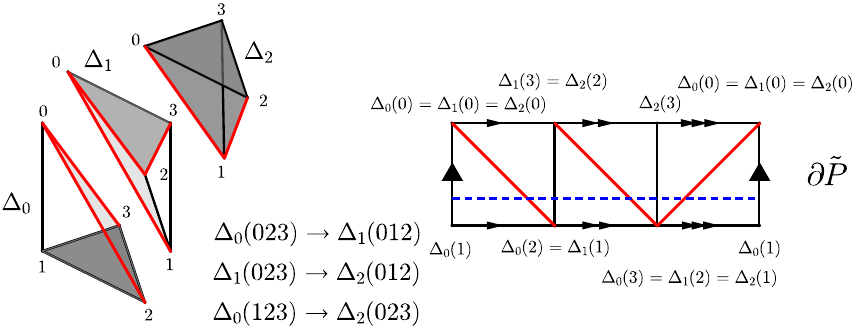}}
    \caption{Left: The solid torus $\tilde{P}$. Right: Boundary complex of $\tilde{P}$, top and bottom vertices become identified in pairs. \label{fig:prism}}
  \end{figure}

The construction of the augmented solid torus is finished by gluing layered solid tori of type $(1,2k+1,2k+2)$, $(1,2m+1,2m+2)$ and $(1,2n,2n+1)$ along the boundary components of $P$.  For each of the boundary components, the even-order edge is glued to the horizontal edge, the edge of order one is glued to a vertical edge, and the remaining edge is glued to the remaining diagonal edge. See Figure~\ref{fig:prism} for details and see \cite{regina} for a more elaborate discussion of augmented layered solid tori. From the description of $M$ above, this shows that the resulting identification space is homeomorphic with $M$. Since the triangulation has $2(k+m+n+1)$ tetrahedra, we obtain $c(M) \le 2(k+m+n+1) = 2 + 2 ||\;\varphi\;||,$ which forces equality.

We conclude this example by describing a normal surface approach to determining the norm. Any normal representative  of the unique non-zero homology class $\varphi \in H_2 (M; \Z_2)$ intersects the horizontal edges of $P$ (i.e., the three edges belonging to the top and bottom triangle of the prism) an even number of times, and all other edges of $P$ an odd number of times. It can be shown that the only least weight least genus normal surface representative of $\varphi$ is the canonical normal representative $S_\varphi.$ The intersection of $S_\varphi$ with the boundary faces of $P$ os shown as a dashed line in the induced boundary of $\partial P$ in Figure~\ref{fig:prism}.

\textbf{Small Seifert fibred spaces with three vertical taut surfaces.}
Let $$M= M'_{k,m,n} = S^2( (1,-1), (2k+2,1), (2m+2,1), (2n+2,1)),$$ where $k, m$ and $n$ are positive integers. As above, one can show that this is triangulated by the \emph{augmented solid torus} $A(2k+2,-1 \mid 2m+2,-1 \mid 2n+2, -1)$, which has $2k+2m+2n+3$ tetrahedra. Hence $c(M) \le 2k+2m+2n+3.$

One has $H^1(M, \Z_2) \cong \Z_2\oplus \Z_2.$ The manifold is obtained similar to above by gluing solid tori to $S\times S^1,$ this time using slopes $(2k+2, 1),$ $(2m+2, 1)$ and $(2n+2, -2n-1).$ A vertical one-sided surface is obtained by taking an arc $\alpha$ in $S$ connecting distinct boundary component of $S$ and attaching to the annulus $\alpha \times S^1$ one-sided incompressible surfaces in the tori met by the boundary of the annulus. This gives surfaces of respective Euler characteristics $-k-m,$ $-m-n$ and $-n-k$ representing the non-trivial $\Z_2$--classes. As in \cite{Jaco-Z2-2013} one argues that these are norm minimising, hence giving 
$c(M) = 3 + \sum_{0\neq \varphi} ||\;\varphi\;||.$

\begin{remark}
  \label{rem:construct}
 To construct a minimal triangulation of $M_{k,m,n}$ (resp. $M'_{k,m,n}$) with \emph{Regina} \cite{regina}, one creates a new 3--manifold by selecting type of triangulation ``Seifert fibred space over $2$-sphere'' and passing the (differently normalised) parameters $(2k+1,1) (2m+1,1) (2n+1,2n+2)$ (resp. $(2k+2,1) (2m+2,1) (2n+2,-2n-1)$). Regina then generates an augmented solid torus of type $A(2k+1,-1 \mid 2m+1,-1 \mid 2n+1, -4n-3)$ (resp. $A(2k+2,-1 \mid 2m+2,-1 \mid 2n+2, -1)$. These minimal triangulations are in general not unique, e.g. another minimal triangulation of  $M_{k,m,n}$ is obtained as $A(2k+1,-2k-2 \mid 2m+1,-2m-2 \mid 2n+1, -1).$ 
\end{remark}

%%%%%%%%%%%%%%%%%

\bibliographystyle{plain}
\bibliography{minimal}

\begin{thebibliography}{10}

\bibitem{regina}
Benjamin~A. Burton, Ryan Budney, William Pettersson, et~al.
\newblock Regina: Software for low-dimensional topology.
\newblock {\tt http://\allowbreak regina-normal.\allowbreak github.\allowbreak
  io/}, 1999--2017.

\bibitem{Cha-complexity-2016}
Jae~Choon Cha.
\newblock Complexity of surgery manifolds and {C}heeger-{G}romov invariants.
\newblock {\em Int. Math. Res. Not. IMRN}, (18):5603--5615, 2016.

\bibitem{Cha-topological-2016}
Jae~Choon Cha.
\newblock A topological approach to {C}heeger-{G}romov universal bounds for von
  {N}eumann {$\rho$}-invariants.
\newblock {\em Comm. Pure Appl. Math.}, 69(6):1154--1209, 2016.

\bibitem{Frohman-one-sided-1986}
Charles Frohman.
\newblock One-sided incompressible surfaces in {S}eifert fibered spaces.
\newblock {\em Topology Appl.}, 23(2):103--116, 1986.

\bibitem{Jaco-0-efficient-2003}
William Jaco and J.~Hyam Rubinstein.
\newblock {$0$}-efficient triangulations of 3-manifolds.
\newblock {\em J. Differential Geom.}, 65(1):61--168, 2003.

\bibitem{Jaco-layered-2006}
William {Jaco} and J.~Hyam {Rubinstein}.
\newblock {Layered-triangulations of 3-manifolds}.
\newblock {\em ArXiv Mathematics e-prints}, March 2006.

\bibitem{Jaco-minimal-2009}
William Jaco, J.~Hyam Rubinstein, and Stephan Tillmann.
\newblock Minimal triangulations for an infinite family of lens spaces.
\newblock {\em J. Topol.}, 2(1):157--180, 2009.

\bibitem{Jaco-coverings-2011}
William Jaco, J.~Hyam Rubinstein, and Stephan Tillmann.
\newblock Coverings and minimal triangulations of 3-manifolds.
\newblock {\em Algebr. Geom. Topol.}, 11(3):1257--1265, 2011.

\bibitem{Jaco-Z2-2013}
William Jaco, J.~Hyam Rubinstein, and Stephan Tillmann.
\newblock {$\Bbb{Z}_2$}-{T}hurston norm and complexity of {$3$}-manifolds.
\newblock {\em Math. Ann.}, 356(1):1--22, 2013.

\bibitem{Luo-angle-2008}
Feng Luo and Stephan Tillmann.
\newblock Angle structures and normal surfaces.
\newblock {\em Trans. Amer. Math. Soc.}, 360(6):2849--2866, 2008.

\bibitem{Luo-combinatorial-2017}
Feng Luo and Stephan Tillmann.
\newblock A new combinatorial class of 3-manifold triangulations.
\newblock {\em Asian J. Math.}, 21(3):543--569, 2017.

\bibitem{Matveev-complexity-1990}
Sergei~V. Matveev.
\newblock Complexity theory of three-dimensional manifolds.
\newblock {\em Acta Appl. Math.}, 19(2):101--130, 1990.

\bibitem{Nakamura-complexity-2017}
K.~{Nakamura}.
\newblock {The complexity of prime 3-manifolds and the first
  $\mathbb{Z}_{/2\mathbb{Z}}$-cohomology of small rank}.
\newblock {\em ArXiv e-prints}, December 2017.

\bibitem{Ni-correction-2015}
Yi~Ni and Zhongtao Wu.
\newblock Correction terms, {$\Bbb{Z}_2$}-{T}hurston norm, and triangulations.
\newblock {\em Topology Appl.}, 194:409--426, 2015.

\bibitem{Rubinstein-one-sided-1978}
J.~H. Rubinstein.
\newblock One-sided {H}eegaard splittings of {$3$}-manifolds.
\newblock {\em Pacific J. Math.}, 76(1):185--200, 1978.

\bibitem{Thurston-norm-1986}
William~P. Thurston.
\newblock A norm for the homology of {$3$}-manifolds.
\newblock {\em Mem. Amer. Math. Soc.}, 59(339):i--vi and 99--130, 1986.

\bibitem{Waldhausen-klasse-1967}
Friedhelm Waldhausen.
\newblock Eine {K}lasse von {$3$}-dimensionalen {M}annigfaltigkeiten. {I},
  {II}.
\newblock {\em Invent. Math. 3 (1967), 308--333; ibid.}, 4:87--117, 1967.

\end{thebibliography}

%%%%%%%%%%%%%%%%%%%%%%%%%%%

\address{William Jaco\\Department of Mathematics, Oklahoma State University, Stillwater, OK 74078-1058, USA\\{william.jaco@okstate.edu}\\-----}

\address{J. Hyam Rubinstein\\School of Mathematics and Statistics, The University of Melbourne, VIC 3010, Australia\\
{joachim@unimelb.edu.au}\\----- }

\address{Jonathan Spreer\\Institut f\"ur Mathematik, Freie Universit\"at Berlin, Arnimallee 2, 14195 Berlin, Germany\\ 
{jonathan.spreer@fu-berlin.de\\-----}}

\address{Stephan Tillmann\\School of Mathematics and Statistics F07, The University of Sydney, NSW 2006 Australia\\{stephan.tillmann@sydney.edu.au}}

\Addresses
                                                      
\end{document}